\newcommand{\bB}{\mathbf{B}}
\newcommand{\bG}{\mathbf{G}}
\newcommand{\chD}{\check{D}}
\newcommand{\cF}{\mathcal{F}}
\newcommand{\CC}{\mathbb{C}}
\newcommand{\QQ}{\mathbb{Q}}
\newcommand{\RR}{\mathbb{R}}
\newcommand{\ZZ}{\mathbb{Z}}
\newcommand{\sS}{\mathsf{S}}
\newcommand{\dqs}{\mathsf{D}\mathsf{Q}\mathsf{S}}
\newcommand{\dcqs}{\mathsf{D}\mathbb{C}\mathsf{Q}\mathsf{S}}
\newtheorem{thm}{Theorem}[section]
\newtheorem{proposition}[thm]{Proposition}
\newtheorem{claim}{Claim}[thm]
\newtheorem{lem}[thm]{Lemma}
\newtheorem{convention}[thm]{Convention}
\theoremstyle{definition}
\newtheorem{definition}[thm]{Definition}
\theoremstyle{remark}
\newtheorem{remark}[thm]{Remark}
\newenvironment{claimproof}{\vspace{.1in} \noindent {\bf Proof of Claim:}}{\hspace{\fill} $\maltese$ \vspace{.2in}}
\newcommand{\trdeg}[2]{\operatorname{tr.deg}_{#1}(#2)}
\begin{document}
	\title{Likely intersections}
	\author{Sebastian Eterovi\'{c}}
 \address{School of Mathematics \\ University of Leeds \\ Leeds, LS2 9JT \\ United Kingdom}
\email{s.eterovic@leeds.ac.uk} 
 \author{Thomas Scanlon}
 \address{Department of Mathematics \\ University of California, Berkeley \\ Evans Hall \\
 Berkeley, CA 94720-3840 \\ United States of America}
 \email{scanlon@math.berkeley.edu}
 \subjclass{03C64, 11G18, 14D07, 14G35}
\begin{abstract}
We prove a general likely intersections 
theorem, a counterpart to the Zilber-Pink 
conjectures, 
under the assumption that the Ax-Schanuel 
property and some mild additional conditions
are known to hold for a given 
category of complex quotient spaces definable
in some fixed o-minimal expansion of the 
ordered field of real numbers.  

For an instance of our general result, consider the case of 
subvarieties of Shimura varieties.  Let 
 $S$ be a Shimura variety.  Let $\pi:D \to \Gamma \backslash D = S$ realize
$S$ as a quotient of 
$D$, a homogeneous space for the action of 
a real algebraic group $G$, by the action of $\Gamma < G$, an arithmetic subgroup.
Let  $S' \subseteq S$ be a special subvariety of $S$ realized as $\pi(D')$ 
for $D' \subseteq D$ a homogeneous space for an algebraic subgroup of $G$.
Let $X \subseteq S$ be an irreducible subvariety of $S$ not contained 
in any proper weakly special subvariety of $S$.  Assume that the 
intersection of $X$ with $S'$ is persistently likely meaning that whenever 
$\zeta:S_1 \to S$ and $\xi:S_1 \to S_2$ are maps of Shimura varieties 
(meaning regular maps of varieties induced by maps of the corresponding 
Shimura data) with $\zeta$ finite,  $\dim \xi \zeta^{-1} X + \dim \xi \zeta^{-1} S' 
\geq \dim \xi S_1$.   Then $X \cap \bigcup_{g \in G, \pi(g D') \text{ is special }} \pi(d D')$ 
is dense in $X$ for the Euclidean topology.
\end{abstract}

\maketitle

\section{Introduction}

If $X$ is an complex 
manifold and $f:Y \to X$ and $g:Z \to X$
are two  sufficiently nice maps from complex analytic spaces to 
$X$, then we say that an intersection 
between $Y$ and $Z$ is \emph{unlikely} if 
$\dim f(Y) + \dim g(Z) < \dim X$ \footnote{ We will be working with cases in which $f(Y)$ is 
automatically a finite Boolean combination of (at least) real 
analytic subvarieties of $X$.  This
happens, for example, when $f:Y \to X$ is proper or definable in an 
o-minimal expansion of the real field. 
 In this case, we could take $\dim$ to mean the o-minimal dimension or 
 the maximal dimension of a submanifold of $f(Y)$.} and that it is \emph{likely} otherwise.
Conjectures of Zilber-Pink type predict that in many cases of 
interest there are very few unlikely intersections.  
For example,  the Zilber-Pink 
conjecture for Shimura varieties takes the 
following form. We let 
$X$ be a Shimura variety and $Y \subseteq X$
 a Hodge generic subvariety, meaning that there is no proper 
weakly special subvariety (in the sense of Shimura varieties) 
$T \subsetneq X$ with $Y \subseteq T$.  The 
conclusion of the conjecture in this 
case is that
the union of all unlikely intersections 
between $Y$ and special subvarieties of $X$ is not Zariski dense in $Y$.   
In this paper, we address the complementary 
question of 
describing the likely intersections.

This is not the first time that the likely 
intersection problem has been addressed.  
In Section~\ref{sec:applications} we discuss
several specializations of our main theorem 
and the relation between our results and those
which appear in the literature. Three notable
instances are the ``all or
nothing'' theorem  of Baldi, Klingler, and Ullmo~\cite{BKU}
on the density of the typical Hodge locus,  {the results of Tayou and Tholozan~\cite{tayou-tholozan} 
describing the typical Hodge locus of polarized variations
of Hodge structure over a smooth complex quasiprojective variety, and 
the work of Gao~\cite{Gao-rank-Betti,Gao-rank-Betti-er} 
(following on the work of Andr\'{e}, 
Corvaja, and Zannier~\cite{ACZ}) on the generic rank of the 
Betti map, from which a sufficient condition for the density 
of torsion in subvarieties of abelian schemes is derived.

We formulate and prove our main theorem (Theorem \ref{thm:density-special}) in terms of 
 definable analytic maps from complex algebraic varieties to definable 
complex quotient spaces. See Section~\ref{sec:cqs} for details.
In brief, a definable complex quotient space $S$ is a 
complex analytic space which may be presented as a double
coset space $\Gamma \backslash G / M$ where $G$ is the connected
component of the real points of an algebraic group, $M \leq G$ is a 
suitable subgroup, and $\Gamma \leq G$ is a discrete subgroup
together with a choice of a definable (in a fixed
o-minimal expansion of the real numbers) fundamental 
set $\mathcal{F}$.  Examples of such definable 
complex quotient spaces include complex tori, Shimura 
varieties, Hopf manifolds, and (mixed) period spaces.  We then 
define a special subvariety of a definable 
complex quotient space $S$ to be the image of a map 
of definable complex quotient space $S' \to S$ (or 
possibly taken only from some subcategory of 
definable complex quotient spaces) which is a 
definable map induced by an algebraic group homomorphism 
followed by translation.  For example, the special subvarieties
of complex tori would be translates by torsion points
of subtori and the special subvarieties of period spaces 
would come from period subdomains.  We must also consider a more
general class of weakly special varieties, which are 
the fibers of maps of definable complex quotient spaces and 
the images of these fibers under other maps of 
definable complex quotient spaces.  We identify a 
condition we call \emph{well-parameterization of weakly 
special subvarieties} whereby all of the weakly special varieties in 
some given definable complex quotient space come from 
those appearing in countably many families of weakly 
special subvarieties. In our applications this condition is easy to 
verify as we restrict to subcategories of definable complex 
quotient spaces for which there are only countably many 
morphisms all told. 

Consider $S = \Gamma \backslash G / M = \Gamma \backslash D$ and 
$D' \subseteq D$ a homogeneous space for a subgroup $G' \leq G$.
We will write $\pi_\Gamma:D \to S$ for the quotient map. 
It may happen that $\pi_\Gamma (gD') \subseteq S$ 
is a special subvariety of $S$ for many choices of $g \in G$, where
``many'' might mean that the set of such $g$ is dense in $G$.
For example, if $D = \mathfrak{h}^2$ is the Cartesian 
square of the upper half plane, 
$\Gamma = \operatorname{PGL}_2^2(\ZZ)$ and 
$D' = \{ (\tau,\tau) : \tau \in 
\mathfrak{h} \}$ is the diagonal, then all modular plane curves
may be expressed as $\pi_\Gamma(gD')$ as $g$ ranges through 
$\operatorname{PGL}_2^2(\mathbb{Q})$ acting via pairs of 
rational linear transformations.   In 
such a situation we might expect that if 
$X$ is a quasiprojective complex algebraic variety and 
$f:X^\text{an} \to S$ is a definable complex analytic map from the 
analytification of $X$ to $S$, then the set $f^{-1} \bigcup_{g \in G, 
\pi_\Gamma(gD') \text{ special }} \pi_\Gamma(g D')$ 
of special intersections is dense in 
$X$ provided that the intersections are likely in the sense that 
$\dim f(X) + \dim D' \geq \dim S$.  This is not quite right as the 
intersection may become unlikely after transformation through a special 
correspondence. We account for this complication with the notion of 
\emph{persistently likely intersections}.   Our main 
Theorem~\ref{thm:density-special} asserts that if we know that the
Ax-Schanuel theorem holds for our given category of definable complex 
quotient spaces (which satisfies some natural closure properties
and the well-parameterization of weakly special subvarieties condition), then,
in fact, when the intersection of $f(X)$ with $\pi_\Gamma(D')$ 
is persistently likely, the set of special intersections is 
dense in $X$.

When working in the setting of variations of Hodge structures,
our result improves the ``all or nothing'' theorem of \cite{BKU} first by
 giving a criterion for the typical Hodge locus to be dense (namely \emph{persistent likeliness}),
and secondly by obtaining a euclidean dense set of likely intersections
using a specific subcollection of the family of all special varieties.
On the other hand, the general setting
of definable quotients spaces does not allow for measure theoretic techniques, 
so we are unable to recover the equidistribution results appearing
in, for example, \cite{BKU} and \cite{tayou-tholozan}. But our result does
imply some of the applications of these equidistribution results. 
For example, in the setting of the moduli space of principally polarized
abelian varieties of dimension $g$, given two subvarieties $S$ and $D$ of $\mathcal{A}_g$
of complimentary dimensions and so that $\dim S\leq 2$, 
our result implies the part of \cite[Theorem 1.22]{tayou-tholozan}
stating that the set of points in $S$ isogenous to a 
point in $D$ is euclidean dense in $S$. 
See also Section \ref{sec:applications} for more applications. 
 An advantage of the present work is that our results 
apply directly to the likely intersections problem to contexts not 
covered by~\cite{tayou-tholozan}, such as for mixed Shimura varieties, or more 
generally, for variations of mixed Hodge structures.

Unsurprisingly, Ax-Schanuel theorems play key roles in the existing 
proofs of the density of special intersections.  What may be surprising
is that our argument is not an abstraction of the proofs appearing,
for example, in~\cite{BKU}, \cite{tayou-tholozan} or~\cite{Gao-rank-Betti}.  
Instead, our arguments are inspired by the work of Aslanyan and 
Kirby~\cite{AK-blur}, especially with the proof of their Theorem 3.1.
The reader will recognize a resemblance between our notion of 
persistent likeliness and the $J$-broad and $J$-free conditions
of~\cite{AK-blur}, which themselves extend freeness and rotundity
conditions from earlier works on existential closedness as a 
converse to Schanuel-type statements.  While the contents 
of our arguments differ, the structure of many of the results of 
Daw and Ren in~\cite{DaRe} inspired our approach.

Our own interest in the likely intersections problem was motivated 
by~\cite[Conjecture 3.13]{PiSc} 
in the second author's work with Pila on effective 
versions of the Zilber-Pink conjecture.

This paper is organized as follows.  In Section~\ref{sec:cqs} we 
define definable quotient spaces and develop some of their basic 
theory.  These definitions and results owe their form to the formalism 
of Bakker, Brunebarbe, Klingler and Tsimerman~\cite{BKT, BBKT} used
to study arithmetic quotients and more generally mixed period spaces.  In 
that section we express precisely what the Ax-Schanuel condition 
means and show that under the well-parameterization of 
weakly special subvarieties hypothesis, it implies a uniform version of itself.  
Section~\ref{sec:density} is devoted to the statement and 
proof of our main theorem on dense special intersections.  
In Section~\ref{sec:applications} we detail several specializations
of the main theorem, including to Hodge loci, intersections with modular
varieties, and density of torsion in subvarieties of abelian schemes.

\subsection*{Acknowledgments}  During the writing of this paper S.E. 
was partially supported through the NSF grant RTG DMS-1646385 and
T.S. was
partially supported by NSF grants DMS-1800492, FRG DMS-1760414, and 
DMS-22010405.  The authors have no competing interests, financial or
otherwise. S.E. thanks E. Ullmo and G. Baldi for hosting him at 
IH\'{E}S, for sharing an early version 
of~\cite{BKU}, and for discussing problems around the density of special 
intersections.  
T.S. thanks Z. Gao, M. Orr, and U. Zannier for offering detailed accounts
of the state of the art on likely intersection problems for modular 
curves and torsion in abelian schemes.  Both authors thank an anonymous 
referee of an earlier version of this paper for suggesting several improvements.

\section{Complex quotient spaces and $\mathsf{S}$-special 
varieties}
\label{sec:cqs}

We express our theorems on likely intersections in terms 
of classes of definable complex quotient spaces.    
Our formalism is similar to what appears in~\cite{BKT}, though
we explicitly include the fundamental domain giving the definable
structure as part of our data.

Throughout we work in an appropriate o-minimal expansion of the real field $\mathbb{R}$ (usually $\mathbb{R}_{\mathrm{an},\exp}$), and the word \emph{definable} is meant with respect to this choice of o-minimal structure. 

\begin{definition}
\label{def:definable-quotient-space}
A \emph{definable quotient space} is given by the data of 
\begin{itemize}
\item  a definable group $G$,
\item  a definable compact subgroup $M \leq G$ of $G$,
\item a discrete subgroup $\Gamma \leq G$ of $G$, and
\item $\mathcal{F} \subseteq D := G/M$ a definable open fundamental set 
for the action of $\Gamma$ on $D$ 
(that is, $D = \bigcup_{\gamma \in \Gamma} \gamma \mathcal{F}$ and there is a
finite subset $\Gamma' \subseteq \Gamma$
so that if $x \in \mathcal{F}$ and $\gamma x \in \mathcal{F}$ for some $\gamma \in \Gamma$, then $\gamma \in \Gamma'$) for which 
the closure $\overline{\mathcal{F}}$ of 
$\mathcal{F}$ is contained in 
$\bigcup_{\gamma \in \Gamma''} \gamma \mathcal{F}$ for some finite subset 
$\Gamma'' \subseteq \Gamma$.
\end{itemize}

We write $S_{\Gamma,G,M;\mathcal{F}}$ 
both for the quotient space $\Gamma \backslash D = \Gamma \backslash G / M$ regarded
as a definable, real analytic space where the definable structure comes from $\mathcal{F}$, and 
for the data $(G,M,\Gamma,\cF)$ giving this space. We denote the corresponding quotient map by $\pi_{\Gamma}:D\rightarrow S_{\Gamma,G,M;\mathcal{F}}$.  When the data are understood, 
we suppress them and write $S$ for $S_{\Gamma,G,M;\mathcal{F}}$ and 
$\pi:D \to S$ for the quotient map.
\end{definition}

The class of definable quotient spaces forms a category $\dqs$ with the following notion of a 
morphism.

\begin{definition}
\label{def:morphism-dqs}  A \emph{morphism} $f:S_{\Gamma_1,G_1,M_1;\cF_1} \to S_{\Gamma_2,G_2,M_2;\cF_2}$
is given by a definable map of groups $\varphi:G_1 \to G_2$ and an element $a \in G_2$ for which
\begin{itemize}
    \item $\varphi(M_1) \leq M_2$,
    \item $\varphi(\Gamma_1) \leq a^{-1} \Gamma_2 a$, and
    \item there is a finite 
set $\Xi \subseteq \Gamma_2$ with $a \overline{\varphi}(\cF_1) \subseteq \bigcup_{\xi \in \Xi} \xi \cF_2$, 
where $\overline{\varphi}:D_1 \to D_2$ is the map induced on the quotient space.
\end{itemize}
The induced map 
on the double quotient space is a definable real analytic map which we also denote by $f$.
\end{definition}

\begin{remark}
If $f:S_{\Gamma_1,G_1,M_1;\cF_1} \to S_{\Gamma_2,G_2,M_2;\cF_2}$ is a morphism
of definable quotient spaces as in Definition~\ref{def:morphism-dqs},
then it is definable in the sense that 
its graph on the specified fundamental domains
\[\operatorname{Graph}(f) := \{ (x,y) \in \mathcal{F}_1 \times \mathcal{F}_2 :
f(\pi_{\Gamma_1}(x)) = \pi_{\Gamma_2}(y) \} \]
is definable.  Indeed, it is clear that the induced map 
$\overline{\varphi}:D_1 \to D_2$ is definable and for $(x,y) 
\in \mathcal{F}_1 \times \mathcal{F}_2$ we have 
that $(x,y) \in \operatorname{Graph}(f)$ if and only if 
$\bigvee_{\xi \in \Xi} \xi \cdot y = a \overline{\varphi} (x)$.
\end{remark}

The category $\dqs$ has a terminal object and
is closed under fiber products.

\begin{proposition}
\label{prop:terminal} 
There is a terminal object in $\dqs$.
\end{proposition}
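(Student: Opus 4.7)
The plan is to take $T$ to be a one-point space with the obvious structure. Concretely, I would let $G = M = \Gamma = \{e\}$ be the trivial group, which is trivially definable, compact, and discrete, and set $\mathcal{F} := D = G/M = \{e\}$. The conditions of Definition~\ref{def:definable-quotient-space} are immediately verified with $\Gamma' = \Gamma'' = \{e\}$: the set $\mathcal{F}$ is open in the zero-dimensional $D$, it covers $D$ under the trivial $\Gamma$-action, the stabilizer condition holds vacuously, and $\overline{\mathcal{F}} = \mathcal{F}$. Let $T$ denote the resulting definable quotient space, which consists of a single point.

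For any $S = S_{\Gamma_1,G_1,M_1;\mathcal{F}_1}$ in $\dqs$, I would exhibit the morphism $S \to T$ via the unique homomorphism $\varphi\colon G_1 \to \{e\}$ together with $a = e$. The three bullets of Definition~\ref{def:morphism-dqs} are verified by inspection: $\varphi(M_1) = \{e\} \leq M$, $\varphi(\Gamma_1) = \{e\} = a^{-1}\Gamma a$, and $a\overline{\varphi}(\mathcal{F}_1) = \{e\} = \mathcal{F}$, so one may take $\Xi = \{e\}$.

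For uniqueness, I would invoke the observation at the end of Definition~\ref{def:morphism-dqs} that a morphism is represented by its induced map on the double quotient. Since there is exactly one set-theoretic map from any set into a singleton, there is at most one morphism $S \to T$, so $T$ is terminal in $\dqs$.

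There is essentially no obstacle here: the proposition amounts to checking that the naive candidate, a single point, carries a canonical structure of a definable quotient space and satisfies the universal property. The only mild care needed is to confirm that the trivial group qualifies as each of the specified ingredients (definable group, compact subgroup, discrete subgroup), all of which hold trivially.
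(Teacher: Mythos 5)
Your proposal is correct and follows essentially the same route as the paper: take $G = M = \Gamma$ trivial with $\mathcal{F} = G/M$ a singleton, and note that the unique map from any $S$ is induced by the trivial homomorphism together with $a = 1$. Your extra care about uniqueness (only one possible $\varphi$ and $a$, hence only one morphism) is a harmless elaboration of what the paper leaves implicit.
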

\begin{proof}
Take $G = M = \Gamma = \{ 1 \}$ to be 
trivial group and then let $\mathcal{F} := 
G/M$, which is a singleton.   The one point 
space $\{ \ast \} = S_{G,\Gamma,M;\mathcal{F}}$
is a definable quotient space and for any 
definable quotient space 
$S = S_{G',\Gamma',M';\mathcal{F}'}$, the 
unique set theoretic map $S \to \{ \ast \}$ 
is induced by the unique map of groups 
$\varphi:G' \to \{ 1 \}$ and $a = 1 \in \{ 1 \}$.
\end{proof}

\begin{proposition}
\label{prop:fiber-product} 
The category $\dqs$ has pullbacks.
\end{proposition}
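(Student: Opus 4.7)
\emph{Approach.} I would construct the pullback explicitly and verify its universal property. Writing the given morphisms as $f_i=(\varphi_i,a_i)\colon S_i\to S_0$ for $i=1,2$, the natural candidate for the underlying group of the pullback is the definable fiber product
\[
G:=G_1\times_{G_0}G_2=\{(g_1,g_2)\in G_1\times G_2:\varphi_1(g_1)=\varphi_2(g_2)\},
\]
a definable subgroup of $G_1\times G_2$. (This already assumes the preliminary reduction $a_1=a_2=1$; otherwise the analogous equation cuts out a coset rather than a subgroup.) Set $M:=G\cap(M_1\times M_2)$ (compact) and $\Gamma:=G\cap(\Gamma_1\times\Gamma_2)$ (discrete), and let $\cF$ be a definable open fundamental set for $\Gamma$ on $D:=G/M$ extracted from $\cF_1\times\cF_2$. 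Writing $S:=S_{\Gamma,G,M;\cF}$, the coordinate projections $G\to G_i$ induce morphisms $p_i\colon S\to S_i$ satisfying $f_1p_1=f_2p_2$ by the very definition of $G$.

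\emph{Universal property.} Given morphisms $(\psi_i,b_i)\colon T\to S_i$ whose composites with $f_i$ induce the same map $T\to S_0$, the composition formula
\[
(\beta,b)\circ(\alpha,a)=(\beta\alpha,\;b\beta(a))
\]
extracted from Definition~\ref{def:morphism-dqs} yields, after absorbing the natural $\Gamma_0$- and $M_0$-ambiguities into the choice of representatives, the equalities $\varphi_1\psi_1=\varphi_2\psi_2$ and $\varphi_1(b_1)=\varphi_2(b_2)$. Consequently $((\psi_1,\psi_2),(b_1,b_2))$ takes values in $G$ and furnishes the required mediating morphism $T\to S$; uniqueness comes from uniqueness of factorization through the two projections $p_1,p_2$.

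\emph{Main obstacle.} The hard part is the construction of $\cF$. The naive candidate $(\cF_1\times\cF_2)\cap D$ need neither cover $D$ under $\Gamma$ nor satisfy the closure condition $\overline{\cF}\subseteq\bigcup_{\gamma\in\Gamma''}\gamma\cF$ of Definition~\ref{def:definable-quotient-space}, so the argument will require a controlled enlargement by finitely many $\Gamma$-translates while preserving definability and bounded overlap—ultimately ensuring all three defining conditions are inherited from those of $\cF_1$ and $\cF_2$. A secondary technical point is the preliminary reduction to $a_1=a_2=1$; one clean route is to replace each $f_i$ by an equivalent morphism obtained by shifting the fundamental set of $S_0$, which preserves the underlying DQS up to canonical isomorphism. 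Once these two points are in place, the remaining verifications—the morphism condition $\Xi$ for each $p_i$, and the tracking of the data-ambiguity in the universal property—reduce to routine bookkeeping using the finite sets $\Xi_{f_1},\Xi_{f_2}$ furnished by $f_1$ and $f_2$.
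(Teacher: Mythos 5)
Your outline — take the fiber product of each piece of data ($G$, $M$, $\Gamma$) and restrict the product fundamental set — is the same as the paper's, and you are if anything more attentive than the paper to the universal property and to verifying the axioms for $\mathcal{F}$ (the paper simply declares $\mathcal{F}_4 := D_4 \cap (\mathcal{F}_2 \times \mathcal{F}_3)$ with no further comment). However, the preliminary reduction you lean on — normalizing $a_1 = a_2 = 1$ by "shifting the fundamental set of $S_0$" — does not go through. Changing $\mathcal{F}_0$ alone does not alter the morphism datum $a_i$ at all, and conjugating the presentation of $S_0$ (replacing $\Gamma_0$ by $a_1^{-1}\Gamma_0 a_1$ and composing with the resulting isomorphism) can absorb at most one of the two translations, since both $f_1$ and $f_2$ target the same $S_0$; the other morphism is then left with a translation $a_1^{-1}a_2 \neq 1$ in general. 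Without handling this, the square you build with the plain projections $(\pi_i, 1)$ only commutes when $a_1 = a_2$.

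The paper's device is worth noting: it forms $G_4 = G_2 \times_{G_1} G_3$ and $M_4 = M_2 \times_{M_1} M_3$ using $\varphi$ and $\psi$ directly, but forms $\Gamma_4 = \Gamma_2 \times_{\Gamma_1} \Gamma_3$ using the conjugated maps $x \mapsto a_2\varphi(x)a_2^{-1}$ and $x \mapsto a_3\psi(x)a_3^{-1}$. These are exactly the maps for which the morphism axiom $\varphi(\Gamma_i) \leq a_i^{-1}\Gamma_1 a_i$ guarantees the images land in $\Gamma_1$, so the fiber product over $\Gamma_1$ makes sense without any normalization of the $a_i$. You should adopt this (or an equivalent device) rather than the reduction you sketched; once that is in place, the rest of your bookkeeping, including your concern about $(\mathcal{F}_1 \times \mathcal{F}_2) \cap D$ as a fundamental set, is reasonable.
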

\begin{proof}
More concretely, 
we need to show that 
given maps $f:S_2 \to S_1$ and $g:S_3 \to S_1$
of definable quotient spaces, there 
are maps of definable quotient 
spaces 
$\overline{g}:S_4 \to S_2$ and 
$\overline{f}:S_4 \to S_3$ fitting into the 
following Cartesian square.

\[ \xymatrix{S_4 \ar[d]_{\overline{g}} \ar[r]^{\overline{f}} & S_3 \ar[d]^{g} \\
S_2 \ar[r]^{f} & S_1 } \]

Express $S_i = S_{G_i,\Gamma_i,M_i;\mathcal{F}_i}$ and $f:S_2 \to S_1$ as 
$\Gamma_2 x M_2 \mapsto \Gamma_1 a_2 \varphi(x) M_1$ and $g:S_3 \to S_1$ 
as $\Gamma_3 x M_3 \mapsto \Gamma_1 a_3 \psi(x) M_1$ where 
$\varphi:G_2 \to G_1$ and $\psi:G_3 \to G_1$ are definable group homomorphisms,
$a_2 \in G_1$ and $a_3 \in G_1$.   Let $G_4 := G_2 \times_{G_1} G_3$, 
$M_4 := M_2 \times_{M_1} M_3$, and $\Gamma_4 := \Gamma_2 \times_{\Gamma_1} \Gamma_3$. 
For the fiber products defining $G_4$ and $M_4$ we use the maps $\varphi:G_2 \to G_1$ (and 
its restriction to $M_2$) and $\psi:G_3 \to G_1$ (and its restriction to $M_3$).  
In defining $\Gamma_4$, we use the maps $x \mapsto a_2 \varphi(x) a_2^{-1}$ and 
$x \mapsto a_3 \psi(x) a_3^{-1}$.    Regarding $G_4$ as a subgroup of $G_2 \times G_3$ and 
then $D_4 := G_4/M_4$ as a subset of $G_2/M_2 \times G_3/M_3 = D_2 \times D_3$, we let 
$\mathcal{F}_4 := D_4 \cap (\mathcal{F}_2 \times \mathcal{F}_3)$.  
\end{proof}

A useful observation is that 
morphisms of definable 
quotient spaces always factor as a surjective map followed by a map induced by an inclusion of subgroups.

\begin{proposition}
\label{prop:factorization} 
Every map $f:S_1 \to S_2$ of definable quotient 
spaces fits into a commutative diagram
\[ \xymatrix{S_1  \ar[rr]^f \ar[rd]_{q}  & & S_2 \\
 & S_3 \ar[ru]_{p} &  } \] 
 where $q:S_1 \to S_3$ is surjective 
 and $p:S_3 \to S_2$ is induced by an inclusion of subgroups.  If we 
 further assume that the group $M_2$ in the presentation of 
 $S_2 = S_{G_2,\Gamma_2,M_2;\mathcal{F}}$ is compact, then $p$ has compact fibers.
\end{proposition}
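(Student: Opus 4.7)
The plan is to take $G_3 := \varphi(G_1)$, writing the morphism $f: S_1 \to S_2$ as the pair $(\varphi: G_1 \to G_2,\ a \in G_2)$. Set $M_3 := G_3 \cap M_2$ and $\Gamma_3 := G_3 \cap a^{-1}\Gamma_2 a$. Then $M_3$ is a definable subgroup of $G_3$, compact whenever $M_2$ is, and $\Gamma_3$ is discrete as a subgroup of the discrete group $a^{-1}\Gamma_2 a$. Because $\varphi(M_1) \leq M_2$ and $\varphi(M_1) \leq G_3$, one has $\varphi(M_1) \leq M_3$; similarly $\varphi(\Gamma_1) \leq \Gamma_3$. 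Let $q: S_1 \to S_3$ be induced by the surjection $\varphi: G_1 \twoheadrightarrow G_3$ with trivial translation, and let $p: S_3 \to S_2$ be induced by the inclusion $\iota: G_3 \hookrightarrow G_2$ with translation $a$. A direct computation on the level of double coset spaces shows $p \circ q = f$, and $q$ is surjective because $\varphi: G_1 \to G_3$ is.

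The main technical step is producing a fundamental set $\cF_3 \subseteq D_3 := G_3/M_3$ for $\Gamma_3$ that makes both $q$ and $p$ morphisms in the sense of Definition~\ref{def:morphism-dqs}. Three conditions must hold: (i) $\cF_3$ is a definable open fundamental set for the $\Gamma_3$-action whose closure lies in finitely many $\Gamma_3$-translates of itself; (ii) $\overline{\varphi}(\cF_1) \subseteq \bigcup_{\xi \in \Xi_q} \xi \cF_3$ for some finite $\Xi_q \subseteq \Gamma_3$; and (iii) $a \cF_3 \subseteq \bigcup_{\xi \in \Xi_p} \xi \cF_2$ for some finite $\Xi_p \subseteq \Gamma_2$. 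Because $\varphi$ is surjective, $\overline{\varphi}: D_1 \to D_3$ is an open quotient map and $\overline{\varphi}(\cF_1)$ is an open definable subset of $D_3$ that covers $D_3$ under the $\varphi(\Gamma_1)$-action. One constructs $\cF_3$ as a definable open refinement of $\overline{\varphi}(\cF_1)$ contained in $D_3 \cap a^{-1}\bigcup_{\xi \in \Xi_p}\xi \cF_2$, and upgrades it to a fundamental set for the (possibly larger) group $\Gamma_3$ using o-minimal tame topology. The main obstacle is precisely this upgrade: one must ensure $\cF_3$ is fundamental for all of $\Gamma_3$, not merely for its subgroup $\varphi(\Gamma_1)$, while simultaneously preserving (ii) and (iii).

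For the compact fibers assertion, suppose $M_2$ is compact; then $M_3 = G_3 \cap M_2$ is compact. The induced map $\overline{p}: D_3 \to D_2$ defined by $xM_3 \mapsto axM_2$ is injective: if $axM_2 = ax'M_2$ with $x, x' \in G_3$ then $x^{-1}x' \in M_2 \cap G_3 = M_3$, so $xM_3 = x'M_3$. Hence any point of the fiber $p^{-1}([y]) \subseteq S_3$ has a representative in $\cF_3$, and multiplication by $a$ embeds this set of representatives into $a\cF_3 \cap \Gamma_2 y M_2$; the latter set is finite by condition (iii) together with the finite-return property of $\cF_2$. Therefore $p^{-1}([y])$ is finite and, in particular, compact.
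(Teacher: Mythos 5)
Your factorization differs from the paper's, and the difference is exactly where your gap sits. The paper takes the \emph{image} data throughout: $G_3=\varphi(G_1)$, $M_3=\varphi(M_1)$, $\Gamma_3=\varphi(\Gamma_1)$, and $\mathcal{F}_3=\overline{\varphi}(\mathcal{F}_1)$, so the fundamental-set requirements are immediate (the translates of $\overline{\varphi}(\mathcal{F}_1)$ by $\varphi(\Gamma_1)$ cover $G_3/M_3$ because the corresponding statement holds upstairs), and both $q$ and $p$ are trivially morphisms. The price is that $p$ need not be injective on the level of $D_3\to D_2$; its fibers are finite unions of copies of $(M_2\cap G_3)/\varphi(M_1)$, which is compact when $M_2$ is — and compactness, not finiteness, is all the proposition asks for. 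You instead take the \emph{saturated} subgroups $M_3=G_3\cap M_2$ and $\Gamma_3=G_3\cap a^{-1}\Gamma_2 a$, which buys you injectivity of $D_3\to D_2$ and finite fibers of $p$, but then you must produce a definable open fundamental set for a group that may be strictly larger than $\varphi(\Gamma_1)$. You yourself flag this as ``the main obstacle'' and resolve it only by appeal to ``a definable open refinement'' and ``o-minimal tame topology''; that is not an argument, and as written the construction of $\mathcal{F}_3$ — the only nontrivial point in the whole proposition — is missing. Note also that passing to a proper refinement of $\overline{\varphi}(\mathcal{F}_1)$ threatens your own condition (ii), since you would then need finitely many $\Gamma_3$-translates of the refinement to cover $\overline{\varphi}(\mathcal{F}_1)$.

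The gap is repairable within your setup, and the repair shows no refinement is needed: take $\mathcal{F}_3:=\overline{\varphi}(\mathcal{F}_1)$ itself. It is open (as the paper notes, $\overline{\varphi}$ is a definable open map), its $\varphi(\Gamma_1)$-translates already cover $D_3$, and the finite-return property for the \emph{larger} group $\Gamma_3$ follows from the third condition in Definition~\ref{def:morphism-dqs} applied to $f$: if $\gamma=a^{-1}\delta a\in\Gamma_3$ satisfies $\gamma\mathcal{F}_3\cap\mathcal{F}_3\neq\varnothing$, then, pushing into $D_2$ via the injection $xM_3\mapsto axM_2$ and using $a\overline{\varphi}(\mathcal{F}_1)\subseteq\bigcup_{\xi\in\Xi}\xi\mathcal{F}_2$, one gets $\xi'^{-1}\delta\xi\in\Gamma_2'$ for some $\xi,\xi'\in\Xi$, so $\gamma$ ranges over a finite set; the closure condition follows by the same kind of argument. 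This is essentially the same computation you carry out for the finiteness of the fibers of $p$, so it should have been made at the point where you construct $\mathcal{F}_3$ rather than deferred. Alternatively, adopt the paper's image construction and accept compact rather than finite fibers.
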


\begin{proof} 
Take $S_i = S_{G_i,\Gamma_i,M_i;\mathcal{F}_i}$
for $1 \leq i \leq 2$ and let 
$\varphi:G_1 \to G_2$ be a definable homomorphism
and $a \in G_2$ so that $f:S_1 \to S_2$ is 
given by $\Gamma_1 x M_1 \mapsto \Gamma_2 a \varphi(x) M_2$.  

Define $G_3 := \varphi(G_1) \leq G_2$, a definable
group.  Set $\Gamma_3 := \varphi(\Gamma_1)$,
$M_3 := \varphi(M_1)$, and $\mathcal{F}_3 :=
\overline{\varphi}(\mathcal{F}_1)$ where 
$\overline{\varphi}:G_1/M_1 \to G_3/M_3$ is 
the induced map.   Since 
$\overline{\varphi}$ is a definable open map, 
$\mathcal{F}_3$ is open and clearly 
$\mathcal{F}_3$ is a fundamental set of the 
action of $\Gamma_3$ on $G_3/M_3$.   Set $S_3 := S_{G_3,\Gamma_3,M_3;\mathcal{F}}$
and let $q:S_1 \to S_3$ be the map $\Gamma_1 x M_1 \mapsto \Gamma_3 \varphi(x) M_3$.  
The map $p:S_3 \to S_2$ is then given by $\Gamma_3 x M_3 \mapsto \Gamma_2 a x M_2$.  
The fibers of $p$ are contained in a finite union of sets definably homeomorphic to the 
homogeneous space $(M_2 \cap G_3)/M_3$, which is compact if $M_2$ is.  
\end{proof}

For the problems we consider in this paper we require that our definable quotient spaces come equipped with 
a complex structure and for the domain $D$ to arise as a subset of an algebraic variety.

\begin{definition}
\label{def:complex-quotient-space}
A \emph{definable complex quotient space} is a definable quotient space $S_{\Gamma,G,M;\cF}$  
together with the data of a real algebraic group $\bG$ and an algebraic subgroup $\bB \leq \bG_\CC$ 
of the base change of $\bG$ to $\CC$ 
for which $G = \bG(\RR)^+$ is the connected component of the identity in the 
real points of a real algebraic group $\bG$,
$M = \bB(\CC) \cap G$, and $D = G/M \subseteq (\bG/\bB)(\CC) =: \chD(\CC)$ is an open domain 
in the complex points of the algebraic variety $\chD$. 

A morphism $f:S_{\Gamma_1,G_1,M_1;\cF_1} \to S_{\Gamma_2,G_2,M_2;\cF_2}$ 
of definable complex quotient spaces is a morphism of 
definable quotient spaces for which the definable map of groups is given by 
a map of algebraic groups $\varphi:\bG_1 \to \bG_2$ for which 
$\varphi(\bB_1) \leq \bB_2$.

The class of definable complex quotient spaces with this notion of morphism forms a category
$\dcqs$.

\end{definition}

We leave it to the reader to check that the proofs of the  
basic closure properties for the category $\dqs$, such as the existence of 
a terminal object and closure under fiber products, go through for the category $\dcqs$.
In practice, the morphisms in $\dcqs$ we consider satisfy a stronger conclusion than what
Proposition~\ref{prop:factorization} gives.  That is, in practice, a map 
$f:S_{G_1,\Gamma_1,M_1;\mathcal{F}_1} \to S_{G_2,\Gamma_2,M_2;\mathcal{F}_2}$ in $\dcqs$ is given 
by a map of algebraic groups $\varphi:\bG_1 \to \bG_2$ (and an element $a \in G_2$) for which 
$M_1$ is a finite index subgroup of $\varphi^{-1} (M_2)$.  It then follows from the proof of 
Proposition~\ref{prop:factorization} that $f$ factors as $f = p \circ q$ where $q:S_1 \to S_3$ is 
a surjective $\dcqs$ morphism and $p:S_3 \to S_2$ is a $\dcqs$ morphism with finite fibers.

As we have defined definable complex quotient spaces, such a space $S_{G,\Gamma,M;\mathcal{F}}$ may
have singularities.  When we restrict to the case that the group $M$ is compact, then the 
singularities are at worst locally isomorphic to those coming from a quotient by a finite group.  
In our applications, we will consider only cases where these quotients may be desingularized 
by passing to a finite cover by another definable complex quotient space.

For some purposes we may wish to restrict to an even smaller category $\sS$.  We always assume about 
our given category $\sS$ of definable complex quotient spaces that it satisfies some 
basic closure properties.  Let us specify these with the following convention.

\begin{convention}
\label{conv:basic-conditions-S}
The category $\sS$ is a subcategory of $\dcqs$ satisfying the following conditions.
\begin{itemize}
    \item The one point space $\{ \ast \}$ is a terminal 
object of $\sS$.
    \item The category  $\sS$ is closed under fiber products.
    \item Every $\sS$-morphism $f:S_1 \to S_2$ 
factors as $f = p \circ q$ where $q:S_1 \to S_3$ is 
a surjective $\sS$-morphism and $p:S_3 \to S_2$ is an $\sS$-morphism with finite fibers.
\end{itemize}
\end{convention}

In Section~\ref{sec:density} we will impose an additional restriction on $\sS$.

\begin{definition}
\label{def:strongly-special}
If $f:S_1 \to S_2$ is an $\sS$-morphism, then 
the image $f(S_1)$ is 
called an \emph{$\sS$-special} subvariety of $S_2$.
\end{definition}
 
 In Definition~\ref{def:strongly-special} we refer to $f(S_1)$ as a 
 special \emph{subvariety}.  It is, in fact, always 
 a complex analytic subvariety of $S_2$.  Indeed, if we factor 
 $f = p \circ q$ as given by Convention~\ref{conv:basic-conditions-S}, then $p:S_3 \to S_2$ 
 is a finite, and hence, proper, map of complex analytic spaces so that by Remmert's proper mapping theorem
 its image $p(S_3) = f(S_1)$
 is a complex analytic subvariety of $S_2$.  Using this observation, we may 
 modify Definition~\ref{def:strongly-special} to require the morphism $f:S_1 \to S_2$ 
 witnessing that $f(S_1)$ is an $\sS$-special subvariety of $S_2$ to be finite.

\begin{definition}
\label{def:family-weakly-special}
An \emph{$\sS$-family 
of weakly special subvarieties of $S \in \sS$}
is given by a pair of $\sS$-morphisms 
\[  \xymatrix{ & S_1 \ar[ld]_\zeta \ar[rd]^\xi & \\ S & & S_2 }\]
for which $\zeta$ is a finite map over its image
and $\xi$ is surjective.  For each 
$b \in S_2$, the image $\zeta \xi^{-1} \{ b \}$ is 
a \emph{weakly $\sS$-special} subvariety of $S$.
\end{definition}

\begin{remark}
An $\sS$-special variety is weakly $\sS$-special 
as if $f:S_1 \to S$ expresses $f(S_1)$ as an 
$\sS$-special subvariety of $S$ with $f$ finite, then 
we can take $\zeta = f$, $S_2 = \{ \ast \}$, $\xi:S_1 \to S_2$ 
the unique map to $\{ \ast \}$.  If we take $\sS$ to be the category 
$\dcqs$, then the converse that every weakly $\sS$-special variety is 
actually $\sS$-special holds.

In another extreme, every 
singleton in $S \in \sS$ is a weakly special variety witnessed by 
$S_1 = S_2$ and $\xi = \zeta = \operatorname{id}_S$.  

\end{remark}

For our results it will be important that all 
$\sS$-weakly special varieties come from countably 
many $\sS$-families of weakly special subvarieties.   We isolate
it as an hypothesis and verify it in cases of 
interest.  

\begin{definition}
\label{def:few-prespecial}
We say that the weakly $\sS$-special subvarieties of $S \in \sS$ are \emph{well-parameterized} if there are countably many 
$\sS$-families of weakly special subvarieties of 
$S$, \[  \xymatrix{ & S_{1,i} \ar[ld]_{\zeta_i} \ar[rd]^{\xi_i} & \\ S & & S_{2,i} } \]
for $i \in \mathbb{N}$ 
so that for every weakly 
$\sS$-special subvariety $S' \subseteq S$ of $S$ there 
is some $i \in \mathbb{N}$ and $c \in S_{2,i}$ so that
$S' = \zeta_{i} \xi_{i}^{-1} \{ c \}$. 
More generally, we say that the weakly $\sS$-special subvarieties
are well-parameterized if for every $S \in \sS$ the 
weakly $\sS$-special subvarieties of $S$ are well-parameterized.
\end{definition}

\begin{remark}
\label{remark:countable-S} When 
$\sS$ is itself countable, by which we mean that
there are only countably many objects in $\sS$ and 
the set of $\sS$-morphisms between any two such objects
is itself countable, then the weakly $\sS$-special subvarieties 
are well-parameterized.
\end{remark}

\begin{remark}
\label{remark:necessity-of-well-parameterized}    The well-paramaterization condition may fail in some 
cases.  From the theory of Douady spaces, we know that all complex analytic
subvarieties of a given compact definable complex quotient space $S$ are parameterized by countably many 
complex analytic families of analytic spaces.  However, when parameterizing 
weakly special varieties, the Douady universal family  need not arise as a weakly special family as defined in 
Definition~\ref{def:family-weakly-special}. Moreover, 
there  need not be natural 
parameterizations of the
weakly special varieties in the 
cases that $S$ is non-compact. 
The Hopf manifold construction  is be instructive here.
For example, consider $p$ and $q$ two 
multiplicatively independent complex numbers of modulus 
less than one, $X := \mathbb{C}^2 \smallsetminus \{ (0,0) \}$,  $\Gamma := \langle (p,q) \rangle$  is 
the subgroup of $\operatorname{Aut}(X)$ generated by 
the map $(x,y) \mapsto (px, qy)$, and 
$M := \Gamma \backslash X$. The Hopf manifold $M$ may 
be realized as a definable complex quotient space and is 
a compact complex manifold with a noncompact, connected group of automorphisms. One by one, these automorphisms 
define special subvarieties of $M \times M$, but they cannot
be parameterized by a family of weakly special varieties.
\end{remark}

Let us indicate now the key functional transcendence condition which may hold 
in a category $\sS$ of definable complex quotient spaces.

\begin{definition}
\label{def:Ax-Schanuel-condtion}
 Fix a category $\mathsf{S}$ of
definable complex quotient spaces. 
We say that   $f:X^\text{an} \to S' 
\subseteq S_{\Gamma,G,M;\cF} \in \sS$,
a 
definable complex analytic map
from the analytification of a complex algebraic variety $X$ to 
a weakly $\sS$-special variety $S' \subseteq S$ 
satisfies the
\emph{Ax-Schanuel condition} 
relative to $\mathsf{S}$ if whenever 
$k \in \mathbb{Z}_+$ is a positive integer,
and $(\gamma,\widetilde{\gamma}):\Delta^k \to X \times D$ is
a complex analytic map,
where \[ \Delta = \{ z \in \CC : \| z \| < 1 \} \text{ ,}\] with $\pi_\Gamma \circ \widetilde{\gamma}= f \circ \gamma$, then 
either \[ \trdeg{\CC}{\CC(\gamma,\widetilde{\gamma})} \geq \dim S' + \operatorname{rk}(d\widetilde{\gamma}) \] or 
$f(\gamma(\Delta^k))$ is contained in a proper weakly $\sS$-special subvariety of $S'$.
\end{definition}

Under the hypothesis that the weakly $\sS$-special subvarieties 
are well-parameterized, 
the Ax-Schanuel condition implies a uniform version of itself.   

Since our statement of this uniform version, expressed as 
Proposition~\ref{prop:uniform-Ax-Schanuel}  below,
is a bit dense, we take this opportunity to explain it with 
a few words.  Basically, what it says is that 
if we are given an $\sS$-family of weakly special
varieties and a family of algebraic varieties which 
might witness the failure of the transcendence
degree lower bound in the Ax-Schanuel property, 
then weakly special variety in the alternative
provided by the Ax-Schanuel property may be chosen from 
one of finitely many preassigned $\sS$-families of 
weakly special varieties.

Before proving Proposition~\ref{prop:uniform-Ax-Schanuel}
we require two lemmas.  The first describes families of
weakly special subvarieties algebraically.  The second allows
us to recast Ax-Schanuel in differential algebraic
terms.

\begin{lem}
\label{lem:algebraicity-weakly-special-family}
Let $f:X^\text{an} \to S_{\Gamma,G,M;\cF} =: S \in \sS$ 
be a definable complex analytic map from a complex 
algebraic variety to a definable complex quotient space in $\sS$. Let 
\[  \xymatrix{ & S_1 \ar[ld]_\zeta \ar[rd]^\xi & \\ S & & S_2 }\]
be an $\sS$-family of weakly special subvarieties.  Then there are algebraically 
constructible sets $B$ and $T \subseteq X \times B$ so that 
the set of fibers $\{ T_b : b \in B(\mathbb{C}) \}$
is equal to $\{ f^{-1} \zeta\xi^{-1} \{ c \} : c \in S_2 \}$. 
\end{lem}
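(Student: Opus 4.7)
The plan is to realize the family $\{f^{-1}\zeta\xi^{-1}\{c\}\}_{c\in S_2}$ as the pullback through $f$ of an algebraically constructible family parametrized by the compact dual $\chD_2$, and then to invoke an o-minimal GAGA theorem to conclude algebraic constructibility in $X\times B$.

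First, since $\zeta$ and $\xi$ are $\sS$-morphisms, they are induced by algebraic group morphisms $\varphi_\zeta\colon\bG_1\to\bG$ and $\varphi_\xi\colon\bG_1\to\bG_2$ (together with translations), so they extend to algebraic maps $\widetilde{\zeta}\colon\chD_1\to\chD$ and $\widetilde{\xi}\colon\chD_1\to\chD_2$ of the compact dual varieties. By Chevalley's theorem on constructibility of images, the set
\[
\Psi := \{(\widetilde{y},\widetilde{c})\in\chD\times\chD_2 \colon \widetilde{y}\in\widetilde{\zeta}(\widetilde{\xi}^{-1}(\widetilde{c}))\}
\]
is an algebraically constructible subset of the complex algebraic variety $\chD\times\chD_2$; its fibers over $\chD_2$ are the algebraic incarnations of the weakly special subvarieties of $\chD$.

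I would next take $B := \chD_2$ (or a disjoint union of finitely many copies indexed by coset representatives of $\varphi_\xi(\Gamma_1)$ in $\Gamma_2$, to absorb the possible multiplicity of sheets of $\zeta\xi^{-1}\{c\}$ sitting above a single $c\in S_2$) and define
\[
T := \{(x,\widetilde{c})\in X\times B \colon \exists\,\widetilde{y}\in D,\ \pi_\Gamma(\widetilde{y})=f(x)\text{ and }(\widetilde{y},\widetilde{c})\in\Psi\}.
\]
Because $\Psi$ is algebraic, $\cF\subseteq D$ is a definable fundamental set (so the existence quantifier may be restricted to a definable set), and $f$ is a definable analytic map, $T$ is a definable subset of the complex algebraic variety $X\times B$. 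Unwinding the equivariance of $\widetilde{\zeta}$ and $\widetilde{\xi}$ under the action of $\Gamma_1$, for $\widetilde{c}\in D_2$ the fiber $T_{\widetilde{c}}$ coincides with $f^{-1}\zeta\xi^{-1}\{\pi_{\Gamma_2}(\widetilde{c})\}$; surjectivity of $\pi_{\Gamma_2}\colon D_2\to S_2$ then yields the required equality of fiber sets.

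The final step is to upgrade $T$ from definable to algebraically constructible, for which I would invoke an o-minimal GAGA (definable Chow) theorem --- in the form due to Peterzil and Starchenko, in the relative version developed in~\cite{BBKT} --- which asserts that a closed definable complex analytic subset of a complex algebraic variety is Zariski closed. Applied to (the closure of) $T$ in $X\times B$, this yields the desired algebraic constructibility, and the constructible decomposition into $T$ and its complement inside $\overline{T}$ recovers the statement.

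The main obstacle is this final algebraization step: one must pin down the version of definable Chow that applies to $T$ as a relative family with definably-varying fibers over the algebraic base $\chD_2$, not merely fiber-by-fiber. A secondary technical matter is the bookkeeping of the $\Gamma_i$-actions and the associated multiplicity of sheets above a fixed $c\in S_2$; this is handled by the finite decomposition into coset pieces mentioned above, which relies on $\varphi_\xi(\Gamma_1)$ having finite index in $\Gamma_2$ (or on replacing $S_1$ by a suitable finite $\sS$-cover via the factorization from Convention~\ref{conv:basic-conditions-S}).
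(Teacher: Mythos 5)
The paper's proof is substantially different from yours, and your route has gaps that I do not believe close.

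The paper first uses the Riemann Existence Theorem to algebraize the finite covering: since $\zeta$ is finite, the fiber product $X^\text{an}\times_S S_1$ is a finite analytic covering of $X^\text{an}$, hence is the analytification of an algebraic variety $X'$. The fiber equivalence relation $E_{\xi\circ f'}\subseteq X'\times X'$ is then a \emph{definable complex analytic subset of the algebraic variety} $X'\times X'$, so the Peterzil--Starchenko theorem applies cleanly to conclude $E_{\xi\circ f'}$ is algebraic; $B$ is taken to be the constructible quotient $X'/E_{\xi\circ f'}$ and $T$ is the graph of the resulting parametrization. This keeps everything visibly algebraic, with definable Chow invoked only for a closed definable analytic subset of an algebraic variety.

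Your proposal parametrizes instead by the compact dual $\chD_2$ and has three genuine problems. First, taking $B=\chD_2$ produces fibers over points $\widetilde{c}\notin D_2$, which need not be of the form $f^{-1}\zeta\xi^{-1}\{c\}$; one would have to cut $B$ down, but $D_2$ is only an open domain, not an algebraic (or even constructible) subset of $\chD_2$, so the resulting parameter space falls outside the category you want. Second, for $\widetilde{c}\in D_2$ your fiber $T_{\widetilde{c}}$ picks up only those $\widetilde{y}$ lying on the single sheet $\widetilde{\zeta}\widetilde{\xi}^{-1}(\widetilde{c})$, whereas $\zeta\xi^{-1}\{\pi_{\Gamma_2}(\widetilde{c})\}$ is built from all $\Gamma_2$-translates of $\widetilde{c}$ modulo $\varphi_\xi(\Gamma_1)$; your coset-representative patch presupposes $\varphi_\xi(\Gamma_1)$ has finite index in $\Gamma_2$, which is not part of the hypotheses. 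Third and most seriously, the final step --- upgrading $T\subseteq X\times\chD_2$ from definable to constructible via definable Chow --- requires $T$ (or its closure) to be a \emph{complex analytic} subset of the product, not merely to have analytic fibers. Your $T$ is obtained by an existential quantifier over $\widetilde{y}\in\cF$, i.e.\ as a projection of a constructible-analytic set; without a properness argument (Remmert) along this projection, which is not supplied and is delicate because $\cF$ is only relatively compact, there is no reason for the image to be analytic. The paper sidesteps all of this by forming the parameter space from the algebraic cover $X'$ rather than from the compact dual.
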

\begin{proof}
By the Riemann Existence Theorem~\cite[Th\'{e}or\`{e}me 5.2]{SGA1XII},
there is an algebraic variety $X'$, a regular map of 
algebraic varieties $\zeta':X' \to X$ and an analytic map 
$f':(X')^\text{an} \to S_1$ realizing $(X')^\text{an}$ as
the fiber product $X^\text{an} \times_S S_1$.  The fiber equivalence relation 
\[E_{\xi \circ f'} := \{ (x,y) \in X' \times X' : \xi (f'(x)) = \xi(f'(y)) \} \] is analytic and definable, and hence algebraic by
the definable Chow theorem~\cite{PeSt}.    
The quotient $B := X'/E_{\xi \circ f'}$ may be 
realized 
within the category of constructible sets
as a constructible set.  Let us write 
$\nu:X' \to B$ for the quotient map.  
We may then take \[T := \{ (x,b) \in X \times B : (\exists x' \in X) \zeta'(x') = x 
\text{ and } \nu(x') = b \} \text{ .}\]
\end{proof}

\begin{definition}
\label{def:relatively-weakly-special}
Let $f:X^\text{an} \to S \in \sS$ be a definable 
complex analytic map from the complex algebraic 
variety $X$ to the 
definable complex quotient space $S$ in $\sS$.  We say that 
a subvariety $Y \subseteq X$ is \emph{relatively weakly $\sS$-special
of relative dimension at most $d$} if there is a weakly $\sS$-special
$S' \subseteq S$ of dimension at most $d$ for which $Y = f^{-1} S'$.
\end{definition}

 Note that in Definition~\ref{def:relatively-weakly-special},
because we allow for the possibility that
$f$ is not a finite map, it could happen that 
the dimension of $Y$ itself is greater than $d$. 
On the other hand, the intersection of $S'$ with 
$f(X)$ may even be empty!  Thus, the dimension of 
$Y$ could be less than $d$.

It follows from Lemma~\ref{lem:algebraicity-weakly-special-family}
that if the weakly $\sS$-special subvarieties are well-parameterized,
then for any definable complex analytic map
$f:X^\text{an} \to S \in \sS$ from a complex algebraic variety 
$X$ to some definable complex quotient space $S$ in $\sS$ that
we can recognize the 
pullbacks under $f$ of prespecials in the sense 
that for each number $d$ the collection of 
relatively weakly $\sS$-special subvarieties of 
dimension at most $d$ comprise a countable collection
of algebraic families of subvarieties of $X$.

\begin{definition}
\label{def:relatively-special-parameter}
Let $f:X^\text{an} \to S \in \sS$ be a definable 
complex analytic map from the complex algebraic variety $X$ to 
to a definable complex quotient space $S$ in $\sS$. 
Given any field $M$ over which $X$ and a countable collection
of families of relatively weakly $\sS$-special subvarieties
of $X$ including all such relatively weakly $\sS$-special 
subvarieties 
are defined, by an \emph{$M$-relatively weakly $\sS$-special 
variety of dimension at most $d$} we mean an $M$-variety
of the form $Y_b$ where $Y \subseteq X \times B$ is 
an algebraic family of weakly $\sS$-special subvarieties of 
dimension at most $d$ and $b \in B(M)$.
\end{definition}

Using the Seidenberg embedding theorem we may 
reformulate the Ax-Schanuel property in differential
algebraic terms.  To be completely honest, the embedding theorem as 
stated and proven by Seidenberg in~\cite{Sei1,Sei2} is not 
quite sufficient in that he starts with a finitely generated 
differential subfield $K \subseteq \mathcal{M}(U)$ of 
a differential field of meromorphic
functions on the open domain $U \subseteq \mathbb{C}^n$ and then 
shows that for any finitely generated differential field extension 
$L$ of $K$ at the cost of shrinking $U$ to some open subdomain 
$V \subseteq U$ we may embed $L$ into $\mathcal{M}(V)$ over the 
embedding of $K$.  For our purposes, we will need to 
start with a possibly countably generated differential field 
$K \subseteq \mathcal{M}(U)$.  The necessary extension of
embeddings theorem is a consequence of the the 
Cauchy-Kovalevskaya theorem~\cite{Kow} and appears as
Theorem~1 of~\cite{PPR}.  Iterating this construction 
countably many steps, we see that if $K \subseteq \mathcal{M}(U)$
is a countable differential subfield of the meromorphic 
functions on some open domain in $\mathbb{C}^n$ and 
$L$ is a countably generated differential field extension of 
$K$, then $L$ embeds into the differential field of germs of 
meromorphic functions at some point $x \in U$ over the 
embedding of $K$.

We recall the generalized Schwartzian and 
generalized logarithmic derivative constructions from~\cite{Sc}.  
Consider $S = S_{G,\Gamma,M;\mathcal{F}}$ a definable complex quotient space
and fix an integer $k$. 
From the action $\bG \curvearrowright \chD$ of the algebraic 
group $\bG$ on the quasiprojective algebraic variety $\chD$ 
and a positive integer $k$, there is a differentially 
constructible map $\widetilde{\chi}:\chD \to Z$ from $\chD$ to 
some algebraic variety $Z$ so that for any differential 
field $(L,\partial_1,\ldots,\partial_k)$ extending $\CC$ 
(where the derivations $\partial_i$ commute and vanish on
$\CC)$ we have that for $x, y \in \chD(L)$, 
\[ \widetilde{\chi}(x) = \widetilde{\chi}(y) \Longleftrightarrow (\exists g \in \bG(C) ) g x = y \]
where \[ C = \{ a \in L : \partial_i(a) = 0 \text{ for } 1 \leq i \leq k \} \] 
is the common constant field of $L$. In particular, if this common constant field is $\CC$, then 
we may express the quotient of $\chD$ by the $\bG(\CC)$ as the image of $\widetilde{\chi}$.  
When $f:X^\text{an} \to S$ is a definable, complex analytic map from the analytification of a
quasiprojective algebraic variety $X$ to $S$, then we may define a differentially analytically constructible
function $\chi:X \to Z$ by the rule that for any meromorphic $\gamma:U \to X$ (where 
$U \subseteq \CC^k$ is an open domain in $\CC^k$), $\chi(\gamma) := \widetilde{\chi} (\pi_\Gamma^{-1} (f(\gamma))$
where $\pi_\Gamma^{-1}$ is any branch of the inverse of $\pi_\Gamma$.  Theorem 3.12 of~\cite{Sc} shows that 
$\chi$ is actually differentially constructible.  (That theorem is stated in the case where 
$X^\text{an} = S$ and $f = \operatorname{id}_S$, but the proof goes through in the more general case.)

\begin{lem}
\label{lem:diff-alg-form-ax-schanuel}
 Fix $\mathsf{S}$ a category of 
definable complex quotient spaces.
Suppose  that the weakly $\sS$-special subvarieties are well-parameterized 
and that $f:X^\text{an} \to S' \subseteq S_{\Gamma,G,M;\cF} \in \sS$  
is a definable 
complex analytic map from the analytification of a complex algebraic variety $X$
to an $\mathsf{S}$-weakly special subvariety of a definable complex quotient space in $\sS$. 
Let $K$ be a countable 
subfield of $\mathbb{C}$ over which 
$X$ and a complete collection of algebraic 
families of relatively weakly $\sS$-special 
varieties are defined.  Let $k$ and $d$ be two 
positive integers.  Then for any 
differential field $(L,\delta_1, \ldots, \delta_k)$ 
with $k$-commuting derivations for which  
$K$ is a subfield of the constants $C$ of $L$
and $C$ is algebraically closed
and any $C$-relatively weakly $\sS$-special $Y$ of 
dimension at most $d$, if
$(\gamma,\widetilde{\gamma}) \in Y(L) \times \check{D}(L)$
satisfies $\chi(\gamma) = \widetilde{\chi}(\widetilde{\gamma})$,  
$\operatorname{rk}( \left( \delta_i \gamma \right)_{1 \leq i \leq k}) = k$ and
$\operatorname{tr.deg}_C C(\gamma,\widetilde{\gamma}) < 
d + k$, then there is a $C$-relatively weakly 
$\sS$-special $Z \subsetneq Y$ for which $\gamma \in Z(L)$.
\end{lem}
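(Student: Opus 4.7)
The plan is to transfer the differential-algebraic hypotheses to the complex-analytic setting via a Seidenberg-type embedding, apply the complex-analytic Ax--Schanuel condition in $\mathcal{M}$, and descend the resulting weakly-special parameter from $\CC$ back to $C$. To begin, fix a weakly $\sS$-special subvariety $T \subseteq S$ with $Y = f^{-1}(T)$ and $\dim T \leq d$; by Lemma~\ref{lem:algebraicity-weakly-special-family} and the well-parameterization hypothesis, we may write $Y = (\mathcal{T}_i)_b$ for some $K$-defined algebraic family $\mathcal{T}_i \subseteq X \times B_i$ and parameter $b \in B_i(C)$. The identity $\chi(\gamma) = \widetilde{\chi}(\widetilde{\gamma})$ forces $\widetilde{\gamma}$ and any local lift of $f\gamma$ under $\pi_\Gamma$ to lie in the same $\bG(C)$-orbit; choose $g \in \bG(C)$ so that after replacing $\widetilde{\gamma}$ by $g \cdot \widetilde{\gamma}$ we obtain (in the analytic realization to follow) $\pi_\Gamma \widetilde{\gamma} = f \gamma$. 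This replacement does not change $\trdeg{C}{C(\gamma, \widetilde{\gamma})}$.

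Let $C_1 \subseteq C$ be a countable algebraically closed subfield containing $K$, the coordinates of $b$ and $g$, and finitely many elements of $C$ sufficient to define over $C_1$ the smallest $C$-subvarieties $W \subseteq X$ and $W^* \subseteq X \times \chD$ containing $\gamma$ and $(\gamma,\widetilde{\gamma})$ respectively. By iteratively enlarging $C_1$, arrange that the countable sub-differential field $L_1 := C_1 \langle \gamma, \widetilde{\gamma}\rangle \subseteq L$ has constants exactly $C_1$. Fix a field embedding $\iota: C_1 \hookrightarrow \CC$ fixing $K$ (possible as $|C_1| \leq \aleph_0$) and extend it via the countable Seidenberg-type extension theorem of~\cite{PPR} to a differential-field embedding $\iota: L_1 \hookrightarrow \mathcal{M}$, where $\mathcal{M}$ denotes the differential field of germs at some $p \in \CC^k$ of meromorphic functions; after reparametrizing, view $\iota(\gamma)$ and $\iota(\widetilde{\gamma})$ as analytic maps $\Delta^k \to X$ and $\Delta^k \to D$ with $\pi_\Gamma \iota(\widetilde{\gamma}) = f \iota(\gamma)$. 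Since the constants of $L_1$ are exactly $C_1$, any $x \in L_1$ with $\iota(x) \in \CC$ satisfies $\iota(\partial_j x) = \partial_j \iota(x) = 0$ for each derivation and hence $x \in C_1$; thus $\iota(L_1) \cap \CC = \iota(C_1)$. Combined with the algebraic closedness of $\iota(C_1)$ this makes $\iota(L_1)/\iota(C_1)$ a regular extension, so $\iota(L_1)$ and $\CC$ are linearly disjoint over $\iota(C_1)$ inside $\mathcal{M}$. Consequently $\trdeg{\CC}{\CC(\iota(\gamma), \iota(\widetilde{\gamma}))} = \trdeg{C}{C(\gamma, \widetilde{\gamma})}$, and the $\CC$-Zariski closure of $\iota(\gamma)$ in $X_\CC$ coincides with the base change $W_\iota$ of $W$.

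Apply the Ax--Schanuel condition (Definition~\ref{def:Ax-Schanuel-condtion}) to $(\iota(\gamma), \iota(\widetilde{\gamma}))$ with target $T_\iota$. Since $\pi_\Gamma$ is a local diffeomorphism, $\operatorname{rk}(d\iota(\widetilde{\gamma})) = \operatorname{rk}(d(f \iota(\gamma)))$ inherits the full rank $k$ from $\operatorname{rk}((\delta_i \gamma)_i) = k$ in the pertinent regime $\dim T = d$, and combined with the bound $\trdeg{\CC}{\CC(\iota(\gamma), \iota(\widetilde{\gamma}))} < d + k$ from the previous paragraph the first Ax--Schanuel alternative fails; hence $f(\iota(\gamma))(\Delta^k)$ lies in some proper weakly $\sS$-special $T' \subsetneq T_\iota$. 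By well-parameterization, $f^{-1}(T') = (\mathcal{T}_j)_{c'}$ for some $j$ and some $c' \in B_j(\CC)$, with $(\mathcal{T}_j)_{c'} \subsetneq Y_\iota$. Consider the $\iota(C_1)$-constructible set
\[ V := \{ c \in B_j : W_\iota \subseteq (\mathcal{T}_j)_c \text{ and } (\mathcal{T}_j)_c \subsetneq Y_\iota \} \text{ .} \]
Since $W_\iota$ is the $\CC$-Zariski closure of $\iota(\gamma)$ and $\iota(\gamma) \in (\mathcal{T}_j)_{c'}$, we have $c' \in V(\CC)$, so $V$ is nonempty. By algebraic closedness of $\iota(C_1)$, $V(\iota(C_1)) \neq \emptyset$; pulling back via $\iota^{-1}$ yields $c \in B_j(C_1) \subseteq B_j(C)$ with $W \subseteq (\mathcal{T}_j)_c \subsetneq Y$, and $Z := (\mathcal{T}_j)_c$ is the desired $C$-relatively weakly $\sS$-special strict subvariety of $Y$ containing $\gamma$. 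The principal technical point on which the argument turns is the transcendence-preservation provided by the linear-disjointness step, which is what allows the final descent from $\CC$-parameters to $C_1$-parameters.
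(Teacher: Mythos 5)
Your overall strategy is the same as the paper's: realize a countable differential subfield containing all the relevant data as germs of meromorphic functions via the Seidenberg-type extension theorem of \cite{PPR}, apply the analytic Ax--Schanuel hypothesis, and then descend the parameter of the resulting proper weakly special variety back to the constants. Where you genuinely differ is the descent: the paper passes to the differential closures $M^\text{dc}$ and $M(\CC)^\text{dc}$ and uses that the latter is an elementary extension of the former, whereas you descend by exhibiting the $\iota(C_1)$-constructible parameter set $V$ and using that a nonempty constructible set over an algebraically closed field has a point over that field. That is a legitimate and arguably more elementary route, and you are also more explicit than the paper both about upgrading $\chi(\gamma)=\widetilde{\chi}(\widetilde{\gamma})$ to an actual lift relation $\pi_\Gamma\circ\widetilde{\gamma}=f\circ\gamma$ and about why the transcendence bound transfers from $C$ to $\CC$.

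However, the justification you give for that transfer --- which you yourself identify as the pivot of the argument --- is a non sequitur. From $\iota(L_1)\cap\CC=\iota(C_1)$ and the algebraic closedness of $\iota(C_1)$ you infer that $\iota(L_1)/\iota(C_1)$ is regular and hence that $\iota(L_1)$ and $\CC$ are linearly disjoint over $\iota(C_1)$ inside $\mathcal{M}$. Regularity does not yield linear disjointness (nor even algebraic freeness) of two subfields of a common field that merely intersect in $\iota(C_1)$: inside $\CC(t)$ the subfield $E=\overline{\QQ}(\pi+t,\,\pi t)$ satisfies $E\cap\CC=\overline{\QQ}$ and $E/\overline{\QQ}$ is regular, yet $\trdeg{\overline{\QQ}}{E}=2$ while the compositum $\CC E=\CC(t)$ has transcendence degree $1$ over $\CC$, so transcendence degree is not preserved. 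What rescues your step is precisely the differential structure you did not invoke: $\iota(L_1)$ is a \emph{differential} subfield of $\mathcal{M}$ (you took $L_1=C_1\langle\gamma,\widetilde{\gamma}\rangle$) whose constant field is $\iota(C_1)$, and by the classical lemma of differential algebra (the Wronskian/Kolchin argument, valid for finitely many commuting derivations) a differential subfield is linearly disjoint, over its own constants, from the constant field of any differential extension. (Note that $E$ above is not closed under $d/dt$, since $\frac{d}{dt}(\pi t)=\pi\notin E$.) With that fact substituted, the equalities $\trdeg{\CC}{\CC(\iota\gamma,\iota\widetilde{\gamma})}=\trdeg{C_1}{C_1(\gamma,\widetilde{\gamma})}=\trdeg{C}{C(\gamma,\widetilde{\gamma})}$ and the identification of the $\CC$-Zariski closure of $\iota(\gamma)$ with $W_\iota$ both go through, and your descent via $V$ is correct. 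The remaining loose ends (that $\operatorname{rk}(d\widetilde{\gamma})$ rather than $\operatorname{rk}(d\gamma)$ enters the Ax--Schanuel inequality when $f$ may collapse, and that $f^{-1}(T')\subsetneq Y$ needs a word when $f$ does not dominate $T$) are glossed over in the paper's own proof as well, so I do not count them against you.
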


\begin{proof}
Consider $Y$ and $(\gamma,\widetilde{\gamma}) \in Y(L)$ as
in the statement of the lemma.   Let $M$ be a countable 
differential subfield of $L$ with an algebraically 
closed field of constants $C'$ containing $K$ and over 
which $Y$ and the point $(\gamma,\widetilde{\gamma})$ 
are defined.  By the embedding theorem, we may realize $M$ 
as a differential field of germs of meromorphic functions.
Let $S' \subseteq S$ be the $\sS$-weakly special
variety of dimension at most $d$ for which $Y = f^{-1} S'$.
By Ax-Schanuel applied to $\gamma$ and $\widetilde{\gamma}$
regarded as meromorphic functions, there is a proper 
weakly $\sS$-special subvariety $S'' \subsetneq S'$ with 
the image of $f \circ \gamma$ contained in $S''$.   
The algebraic variety $Z := f^{-1} S''$ is then relatively
$\mathbb{C}$-weakly special.  Let $M' := M^\text{dc}$ be the differential
closure of $M$ and $M'' := M(\mathbb{C})^\text{dc}$ be the differential
closure of the differential field generated over $M$ by $\mathbb{C}$.
In $M''$, $\gamma$ satisfies the condition that it belongs to a 
$\mathsf{C}$-relatively weakly special variety of relative dimension 
strictly less than $d$ where $\mathsf{C}$ is the constant field. 
As $M''$ is an elementary extension of $M'$, the same is true in 
$M'$.  Since the constant field of the differential closure is the 
algebraic of the constant field of the initial field, we 
see that $\gamma$ belongs to a $C$-relatively weakly special 
variety of relative dimension strictly less than $d$. 
\end{proof}

A uniform version of the Ax-Schanuel condition follows 
from Lemma~\ref{lem:algebraicity-weakly-special-family} using the 
compactness theorem.

\begin{proposition}
\label{prop:uniform-Ax-Schanuel}
 Let $f:X^\text{an} \to S' \subseteq S_{\Gamma,G,M;\cF} \in \sS$ be a definable 
complex analytic map from the analytification of a complex algebraic variety $X$
to a definable complex quotient space satisfying 
the same hypotheses as in Lemma~\ref{lem:diff-alg-form-ax-schanuel}. 

Given an $\sS$-family 
\[ \xymatrix{  & S_{1} \ar[ld]_{\zeta} \ar[rd]^{\xi} & \\ S & & S_{2} } \]
of weakly $\sS$-special subvarieties of $S$, a 
positive integer $k \in \mathbb{Z}_+$, and 
a family $Y \subseteq (X \times \check{D}) \times B$
of subvarieties of $X \times \check{D}$, then there 
are finitely many $\sS$-families of weakly 
special subvarieties 
\[ \xymatrix{  & S_{1,i} \ar[ld]_{\zeta_i} \ar[rd]^{\xi_i} & \\ S & & S_{2,i} } \]
for $1 \leq i \leq n$ so that for any pair 
of parameters $b \in B$ and $c \in S_2$ and 
analytic map $(\gamma,\widetilde{\gamma}):\Delta^k \to Y_b \subseteq 
X \times D$ with $\gamma(\Delta^k) \subseteq \zeta \xi^{-1} \{ c \} =: S'_c$, $f \circ \gamma = \pi_\Gamma \circ \widetilde{\gamma}$, $\operatorname{rk}(d \widetilde{\gamma}) = k$, and 
$\dim Y_b < k + \dim S_c'$, 
there is some $i \leq n$
and $d \in S_{2,i}$ for which 
$f \circ \gamma(\Delta^k) \subseteq \zeta_i \xi_i^{-1} \{ d \} \subsetneq S'_c$.
\end{proposition}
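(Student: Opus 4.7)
The plan is to combine the differential-algebraic reformulation of Ax-Schanuel from Lemma~\ref{lem:diff-alg-form-ax-schanuel} with a compactness argument in the theory $\mathrm{DCF}_{0,k}$ of differentially closed fields of characteristic zero with $k$ commuting derivations.

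First, invoking well-parameterization, fix a countable enumeration $(\zeta_j,\xi_j)_{j \in \mathbb{N}}$ of $\sS$-families exhausting the weakly $\sS$-special subvarieties of $S$. By Lemma~\ref{lem:algebraicity-weakly-special-family} the pullbacks $f^{-1}(\zeta_j\xi_j^{-1}\{d\})$ form an algebraically constructible family in $X$ parameterized by $S_{2,j}$, all defined over some countable subfield $K \subseteq \CC$ that also supports the given data $(B, Y, \zeta, \xi)$ as well as the differentially constructible maps $\chi:X \to Z$ and $\widetilde{\chi}:\chD \to Z$ coming from the generalized Schwarzian construction.

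Next, work in a sufficiently saturated $(L,\delta_1,\dots,\delta_k) \models \mathrm{DCF}_{0,k}$ whose constant field $C$ is algebraically closed and contains $K$. Encode the hypotheses as a partial type $p(b,c,\gamma,\widetilde{\gamma})$ asserting: $b \in B$, $c \in S_2$, $(\gamma,\widetilde{\gamma}) \in Y_b \cap (X \times \chD)$, $\operatorname{rk}\bigl((\delta_i \gamma)_i\bigr) = k$, $\chi(\gamma) = \widetilde{\chi}(\widetilde{\gamma})$, and $\dim Y_b < k + \dim S'_c$ where $S'_c := \zeta\xi^{-1}\{c\}$ (fiber dimensions being uniformly definable in constructible families). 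For each $j$, let $\psi_j(b,c,\gamma)$ be the formula asserting the existence of $d \in S_{2,j}(C)$ with $\gamma \in f^{-1}(\zeta_j\xi_j^{-1}\{d\}) \subsetneq f^{-1}(S'_c)$. Lemma~\ref{lem:diff-alg-form-ax-schanuel} applied to any realization of $p$ in $L$ produces such a $j$, so $p \vdash \bigvee_{j \in \mathbb{N}} \psi_j$. Were no finite $n$ to satisfy $p \vdash \bigvee_{j \leq n} \psi_j$, saturation would yield a realization of $p \cup \{\neg\psi_j : j \in \mathbb{N}\}$, contradicting the lemma. Fix such an $n$; the families $(\zeta_i,\xi_i)_{i\le n}$ will be the required finite list.

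To transfer back to the analytic setting, given analytic $(\gamma,\widetilde{\gamma}):\Delta^k \to Y_b \cap (X \times D)$ meeting the hypotheses, regard $\gamma,\widetilde{\gamma}$ as tuples in the differential field $\mathcal{M}(\Delta^k)$ of meromorphic functions on $\Delta^k$, whose constant field is $\CC \supseteq K$. Embedding this differential field into a saturated model of $\mathrm{DCF}_{0,k}$ realizes $p$, so the compactness conclusion supplies $i \leq n$ and $d \in S_{2,i}(\CC)$ with $\gamma(\Delta^k) \subseteq f^{-1}(\zeta_i\xi_i^{-1}\{d\})$, whence $f\circ\gamma(\Delta^k) \subseteq \zeta_i\xi_i^{-1}\{d\} \subsetneq S'_c$. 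The main technical obstacle is the bookkeeping needed to present every ingredient (the dimension inequality, the Schwarzians $\chi$ and $\widetilde{\chi}$, and the parameterizing families) as uniformly first-order definable over $K$; once this setup is in place, the compactness step itself is entirely routine.
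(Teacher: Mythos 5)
Your proposal is correct and is essentially the paper's own route: the published proof consists of "apply the compactness theorem," deferring to the cited formalizations of Aslanyan and Kirby, which are exactly the $\mathrm{DCF}_{0,k}$ type-and-saturation argument you spell out using Lemma~\ref{lem:algebraicity-weakly-special-family} for algebraicity of the families and Lemma~\ref{lem:diff-alg-form-ax-schanuel} for the differential Ax-Schanuel input, with the Seidenberg-style embedding handling the return to meromorphic functions. The only bookkeeping point to make explicit is that the parameter $c \in S_2$ (and the fiber $S'_c$ together with $\dim S'_c$) must be encoded through the algebraically constructible family furnished by Lemma~\ref{lem:algebraicity-weakly-special-family}, rather than quantified over $S_2$ itself, so that your partial type is genuinely first-order over $K$.
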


\begin{proof}
Apply the compactness theorem to Lemma~\ref{lem:algebraicity-weakly-special-family}.  
See the proofs of~\cite[Theorem 3.5]{As-wZP} or~\cite[Theorem 4.3]{Ki-exp} for 
details on how to formalize the compactness argument.
\end{proof}

\section{Density of special intersections}
\label{sec:density}

In this section we state and prove our general theorem that, when 
persistently likely, intersections with special varieties are dense.  

Throughout this section $\mathsf{S}$ is a category of definable complex 
quotient spaces satisfying our usual hypotheses from Convention~\ref{conv:basic-conditions-S}  and 
some further requirements. 
Let us specify   with the following convention the properties we require.

\begin{convention}
\label{conv:S-conditions}
The category $\mathsf{S}$ of definable complex quotient spaces satisfies the
following conditions.
\begin{itemize}
    \item $\mathsf{S}$ is closed under fiber products.
     \item The terminal definable 
    complex quotient (one point) space 
    $\{ \ast \}$ belongs to $\mathsf{S}$ 
    as do the unique maps 
    $S \to \{ \ast \}$ for $S \in \mathsf{S}$.
    \item If $f:S_1 \to S_2$ is an $\mathsf{S}$-morphism then there 
    are $\mathsf{S}$-morphisms $q:S_1 \to S_3$ and $p:S_3 \to S_2$ 
    so that $f = p \circ q$, $q$ is surjective, and $p$ has finite fibers.
    \item For every $S \in \mathsf{S}$ there is some smooth $S' \in \mathsf{S}$ and  
    a finite surjective $\mathsf{S}$-morphism $S' \to S$.
    \item  The $\mathsf{S}$-weakly special varieties are well-parameterized.
    \item  Every definable analytic map $f:X^\text{an} \to S \in \mathsf{S}$
    from the analytification of an algebraic variety to a definable complex quotient space in 
    $\mathsf{S}$ considered in this section 
    satisfies the Ax-Schanuel condition relative to $\mathsf{S}$ .
\end{itemize}
\end{convention}

With the following definition we specify what is meant by intersections being 
persistently likely.

\begin{definition}
\label{def:persistently-likely}
Let $S = S_{G,\Gamma,M;\mathcal{F}} \in \mathsf{S}$.  
We suppose that the inclusion
$S' = S_{G',\Gamma \cap G',M \cap H; \mathcal{F}'}  \hookrightarrow S$ is 
an $\mathsf{S}$-morphism where if $D = G/M$ and $D' = G'/(G' \cap M)$, then 
$\mathcal{F}' = \mathcal{F} \cap D'$.  
Suppose that $G' \leq G$ and $D' = G'/(M \cap G')$.  Let $f:X^\text{an} \to S$
be a definable complex analytic map.   We say that $X$ and $S'$ have 
\emph{likely intersection} if $\dim f(X) + \dim S' \geq \dim S$, where here, the dimension 
is the o-minimal dimension.   We say the intersection is 
\emph{persistently likely} if whenever $\zeta:S_1 \to S$ and $\xi:S_1 \to S_2$ are 
$\mathsf{S}$-morphisms with $\zeta$ finite and $\xi$ surjective, then 
$\dim \xi (\zeta^{-1} f(X)) + \dim \xi (\zeta^{-1} S') \geq 
\dim S_2$.
\end{definition}

Note that this definition does not actually 
require $f(X)\cap S'$ to be non-empty for 
the intersection to be likely. 

\begin{remark}
\label{rk:persistent-family}
In Definition~\ref{def:persistently-likely}, we have consider only one special 
subvariety $S'$ of $S$, but we really intend to consider many.  With the notation 
as in Definition~\ref{def:persistently-likely}, for any $g \in G$, the set 
$\pi_\Gamma(gD' \cap \mathcal{F}) \subseteq S$ is locally (away from $\pi_\Gamma(gD' 
\cap \partial(\mathcal{F}))$) a complex analytic subvariety of 
$S$ of dimension equal to that of $D'$.  Indeed, in several cases of interest for 
many choices of $g$ (where this may mean that the set of suitable $g$ is 
dense in $G$) the set $\pi_\Gamma(gD')$ is actually a special subvariety of $S$.  
Persistent likelihood of the intersection of $X$ with $S'$ is equivalent to the 
persistent likelihood of the intersection of $X$ with these $\pi_\Gamma(g D')$.
\end{remark}

With this definition in place we may now state our main theorem.

\begin{thm}
\label{thm:density-special}
Let $S  = S_{G,\Gamma,M;\mathcal{F}} \in \mathsf{S}$ and let
$f:X^\text{an} \to S$ be a definable complex analytic map from the irreducible quasi-projective
complex algebraic variety $X$ to $S$.  Suppose that $S' \subseteq S$ is a
special subvariety expressible as $\pi_\Gamma(D')$ where $D' \subseteq D = G/M$ is a 
homogeneous space in $D$. 
Suppose moreover that 
the intersection of $X$ with $S'$ is persistently likely. 
Let $U \subseteq X(\CC)$ be an open subset of the 
complex points of $X$.   Then the set 
$B := \{ g \in G : f(U) \cap \pi_\Gamma (gD' \cap \mathcal{F}) \neq \varnothing \}$ 
has nontrivial interior.  In particular, if the set 
$\{ g \in G : \pi_\Gamma(gD') \text{ is a special subvariety of } S \}$ is 
Euclidean dense in $G$, then the set of special intersections, 
\[f^{-1} \bigcup_{g \in G, \pi_\Gamma(g D') \text{ is special}} \pi_\Gamma(g D') \] 
is dense in $X$ for the Euclidean topology.
\end{thm}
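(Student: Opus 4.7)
The plan is to construct a definable real analytic incidence variety parametrizing points of $U$ lying over translates $gD'$, compute its dimension from the likely intersection hypothesis, and use the Ax-Schanuel property together with persistent likeliness to show that its projection to $G$ is generically submersive, forcing $B$ to have nonempty interior.

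Concretely, after shrinking $U$ so that $\pi_\Gamma$ admits a definable analytic section over $f(U)$, producing a lift $\tilde{f} \colon U \to \mathcal{F} \subseteq D$, I form the definable real analytic set
$$\mathcal{I} := \{(g,x) \in G \times U : \tilde{f}(x) \in gD'\}.$$
The projection $p \colon \mathcal{I} \to G$ has image contained in $B$. Since the map $\Phi \colon G \times U \to D$ given by $(g,x) \mapsto g^{-1}\tilde{f}(x)$ is a submersion (as $G$ acts transitively on $D$) and $\mathcal{I} = \Phi^{-1}(D')$, transversality yields
$$\dim \mathcal{I} = \dim G + \dim X - \dim S + \dim S' \geq \dim G$$
by likely intersection. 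Arguing by contradiction, suppose $p(\mathcal{I})$ has empty interior; by o-minimality this forces $\dim p(\mathcal{I}) < \dim G$, so a generic fiber $p^{-1}(g) = \tilde{f}^{-1}(gD')$ has an irreducible analytic component $V_g$ of dimension $k > \dim X + \dim S' - \dim S$.

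I then apply the Ax-Schanuel property to a smooth parametrization $\gamma \colon \Delta^k \to V_g$ with $\tilde\gamma := \tilde{f} \circ \gamma$. The constraint $\tilde\gamma(\Delta^k) \subseteq gD' \subseteq \check{D}$ confines $(\gamma, \tilde\gamma)$ to $V_g \times gD'$, bounding the Zariski closure of its image and defeating the transcendence-degree alternative for appropriately generic $\gamma$. Hence the second alternative must apply: $f(V_g)$ lies in a proper weakly $\mathsf{S}$-special subvariety $T_g \subsetneq S$. Proposition~\ref{prop:uniform-Ax-Schanuel} applied to the algebraic family $\{\tilde{f}^{-1}(gD')\}_{g \in G}$ yields finitely many $\mathsf{S}$-families of weakly specials covering all such $T_g$, and a pigeonhole reduction produces a subset $G_0 \subseteq p(\mathcal{I})$ of the same dimension together with a single $\mathsf{S}$-family
$$S \xleftarrow{\zeta} S_1 \xrightarrow{\xi} S_2$$
so that $T_g = \zeta(\xi^{-1}(c(g)))$ for some $c(g) \in S_2$ and every $g \in G_0$.

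The concluding step, which is the main technical obstacle, is to derive a contradiction from persistent likeliness applied to $(\zeta, \xi)$. The containment $f(V_g) \subseteq \zeta(\xi^{-1}(c(g)))$ collapses $\xi\zeta^{-1}(f(V_g))$ to the single point $c(g)$, whereas persistent likeliness gives
$$\dim \xi\zeta^{-1}(f(X)) + \dim \xi\zeta^{-1}(S') \geq \dim S_2,$$
which should translate, via a careful push-pull analysis of $\mathcal{I}$ through $(\zeta, \xi)$, into a lower bound on the dimension of intersections in $S_2$ incompatible with the pointwise collapse. Carrying this out requires careful bookkeeping of how the $\mathsf{S}$-correspondence transports the family of translates $\pi_\Gamma(gD')$ and how excess fiber dimension upstairs becomes a dimension inconsistency downstairs; the parallels with the Aslanyan--Kirby argument highlighted in the introduction should guide the combinatorics. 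The final density statement then follows at once: applied to every open $U \subseteq X(\mathbb{C})$, the nonempty interior of $B$ combined with the Euclidean density of $\{g \in G : \pi_\Gamma(gD') \text{ is special}\}$ places a special $g$ in the interior of $B$, so $f(U)$ meets some special subvariety.
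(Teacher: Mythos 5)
Your setup matches the paper's own strategy: the incidence set $\mathcal{I}$ is the paper's correspondence $R\subseteq U\times G$, your transversality computation gives exactly the paper's $\dim R=\dim G+2k$, and the reduction ``$B$ has empty interior $\Rightarrow$ some fiber $\tilde f^{-1}(gD')$ has a component of complex dimension $>k$ $\Rightarrow$ (Ax--Schanuel) that component lies in a proper weakly $\mathsf{S}$-special $S_b\subsetneq S$'' is also how the paper proceeds (after preliminary reductions you omit but which are needed: replacing $f$ by an embedding so that $\dim X=\dim f(X)$ enters the count, passing to a smooth finite cover of $S$, and arranging that $f(X)$ is not contained in any proper weakly special, which is what licenses discarding the non-surjective $\zeta_i$).

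The genuine gap is your concluding step, and the mechanism you sketch for it would not work as stated. The containment $f(V_g)\subseteq\zeta(\xi^{-1}\{c(g)\})$ only collapses the image of the single atypical component $V_g$ to a point of $S_2$; persistent likeliness is a statement about $\xi\zeta^{-1}f(X)$, which can perfectly well have large dimension while $f(V_g)$ maps to a point, so there is no ``pointwise collapse versus lower bound'' contradiction from one application of the hypothesis. What the paper actually does is quantitative and iterative: after shrinking $U$ so that for each of the finitely many families the fiber dimension $\dim\bigl(f(U)\cap\xi_i^{-1}\{\xi_i(f(u))\}\bigr)$ is constant on $U$, it combines the additivity $\dim f(U)=\dim\xi_i f(U)+\dim(f(U)\cap S_b)$, the homogeneity of $S'$ giving $\dim S'=\dim\xi_i(S')+\dim(S'\cap S_b)$, and persistent likeliness (with defect $k'\ge 0$) to show that the expected complex dimension of a component of $f(U)\cap \pi_\Gamma(gD')$ \emph{inside} $S_b$ is still at most $k$; hence $V_g$ is again an atypical component inside the smaller ambient $S_b$, and one applies uniform Ax--Schanuel again, enlarging the finite list of families and descending to a strictly smaller weakly special. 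Repeating this at most $\dim S+1$ times yields the contradiction. Without this descent (or some substitute argument), your proof establishes only that atypical fibers force $V_g$ into some proper weakly special, which by itself does not contradict $\dim p(\mathcal{I})<\dim G$; so the key claim --- that after shrinking $U$ no fiber has dimension exceeding $2k$ --- remains unproved in your proposal.
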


\begin{proof}
We break the proof of Theorem~\ref{thm:density-special} into 
several claims.  The 
claims at the beginning 
of the proof are really 
just reductions 
permitting us to 
consider a simpler 
situation.  The 
main steps of the proof
begin with 
Claim~\ref{claim:dim-R}
in which we compute the 
dimension of the incidence
correspondence $R$.  We 
then use this computation 
to show that $B$ and 
$G$ have the same o-minimal 
dimension, so that 
$B$ has nontrivial interior
in $G$.

\begin{claim}
\label{claim:injective} We may assume that 
$f:X^\text{an} \to S$ is an embedding.  
\end{claim}
\begin{claimproof}
The equivalence relation $E_f := \{ (x,y) \in X \times X : 
f(x) = f(y) \}$ is a definable and complex analytic 
subset of the quasi-projective 
algebraic variety $X \times X$.  Hence, by the 
definable Chow theorem, $E_f$ is itself algebraic.   
Let $Y$ be a nonempty Zariski open subset of 
$X/E_f$, considered as constructible set.  Then $f$
induces an embedding $\overline{f}:Y \hookrightarrow S$ 
for which the image of $\overline{f}$ is dense in 
$f(X)$.  Shrinking $U$, we may assume that 
$f(U) \subseteq \overline{f}(Y)$ and then replacing 
$U$ by $U' := \overline{f}^{-1} f(U) \subseteq Y(\CC)$,
 we see that if the theorem holds for 
$\overline{f}:Y^\text{an} \to S$ and $U'$, then it also 
holds for $f:X^\text{an} \to S$ and $U$.
\end{claimproof}

With Claim~\ref{claim:injective} in place, from now on we will regard 
$X$ as a locally closed subvariety of $S$.    With the next claim we record 
the simple observations that it suffices to prove the theorem 
for any given open subset of $U$ in place of $U$ and that 
we take $U$ to be definable.

\begin{claim}
\label{claim:U-definable} If Theorem~\ref{thm:density-special}
holds for some nonempty open $V \subseteq U$ in place of $U$, 
then it holds as stated.  Moreover, we may assume that $U$ is definable.   
\end{claim}
\begin{claimproof}
The set $\{ g \in G : V \cap \pi(g D' \cap \mathcal{F}) \neq \varnothing \}$ is 
a subset of $\{ g \in G : U \cap \pi(g D' \cap \mathcal{F}) \neq \varnothing \}$.
Hence, if the former set has nonempty interior, so does the latter.  For the
``moreover'' clause apply the main body of the claim to the case that $V \subseteq U$ is 
a nonempty open ball.
\end{claimproof}

From now on we will take $U$ to be definable and will continue to refer to the open subset 
of $X$ under consideration as $U$ even after taking various steps to shrink it.

Another basic reduction we shall employ is that it suffices to prove the 
theorem for a finite cover of $S$.

\begin{claim}
\label{claim:thm-for-finite-cover} If $\rho:\widetilde{S} \to S$ is a finite surjective $\mathsf{S}$-morphism,
then we may find an instance of 
the statement of Theorem~\ref{thm:density-special}
with $\widetilde{S}$ in place of $S$ so that the 
truth of Theorem~\ref{thm:density-special} for 
$\widetilde{S}$ implies the result of $S$. 
\end{claim}
\begin{claimproof}
Filling the Cartesian square
\[ \xymatrix{ Y \ar[r]^{\overline{f}} \ar[d]_{\overline{\rho} } & \widetilde{S}  \ar[d]^{\rho} \\
X^\text{an} \ar[r]^{f} & S  } \]
we obtain a complex analytic space $Y := X^\text{an} 
\times_S \widetilde{S}$.   Since 
$\overline{\rho}:Y \to X^\text{an}$ is finite, 
$Y$ is itself the analytification of an algebraic 
variety.  Let $\widetilde{X}$ be a component of this 
algebraic variety and then let 
$\widetilde{U} := \overline{\rho}^{-1} U$.  

The map $\rho:\widetilde{S} \to S$ comes from a 
homomorphism of algebraic groups 
$\varphi:\widetilde{\mathbf{G}} \to \mathbf{G}$ 
and some element $a \in G$ where 
$\widetilde{S} = S_{\widetilde{G},\widetilde{\Gamma},\widetilde{M};\widetilde{\mathcal{F}}}$.  This 
map induces a map 
$\widehat{\rho}:\widetilde{G}/\widetilde{M} = : \widetilde{D} \to D$.  Let $\widetilde{D}'$ be a 
component of $\widehat{\rho}^{-1} D'$.   If we 
succeed in showing that $\{ g \in G' : 
\overline{f}(\widetilde{U}) \cap \pi_{\widetilde{\Gamma}}
(g \widetilde{D}' \cap \widetilde{\mathcal{F}} \}$ 
contains some nonempty open set $V$, then 
$a \varphi(V)$ would be a nonempty open subset of 
$\{ g \in G : f(U) \cap \pi_\Gamma (gD' \cap 
\mathcal{F}) \}$, as required.
\end{claimproof}

Let us record a useful consequence of 
Claim~\ref{claim:thm-for-finite-cover}.

\begin{claim}
\label{claim:smooth}  We may assume that 
$S$ is smooth.
\end{claim}
\begin{claimproof}
By Convention~\ref{conv:S-conditions}, we may 
find a finite and 
surjective $\mathsf{S}$-morphism $\widetilde{S} \to S$
with $\widetilde{S}$ smooth.  By Claim~\ref{claim:thm-for-finite-cover},  if we know the theorem for 
$\widetilde{S}$, then we may deduce it for $S$.
\end{claimproof}

Another useful consequence of Claim~\ref{claim:thm-for-finite-cover}
is that we may assume that 
$f(X)$ is dense in $S$ with respect to the weakly special 
topology.

\begin{claim}
\label{claim:dense-weakly-special}
We may assume that there is no proper weakly special 
variety $S'' \subsetneq S$ with $f(X) \subseteq S''$.
\end{claim}
\begin{claimproof}
Let us prove Theorem~\ref{thm:density-special} by 
induction on the dimension of $S$.  If $f(X) \subseteq S'' \subsetneq S$
where $S''$ is a weakly special variety, then 
we could find a 
finite $\mathsf{S}$-morphism $\zeta:S_1 \to S$, a surjective
$\mathsf{S}$-morphism $\xi:S_1 \to S_2$, and a point $b \in S_2$ so that 
$f(X) \subseteq \zeta(\xi^{-1} \{ b \})$.  
By Claim~\ref{claim:thm-for-finite-cover}, we may assume that 
$S_1 = S$ and $\zeta = \operatorname{id}_S$.  That is, 
$f(X) \subseteq \xi^{-1} \{ b \} =: S_b \subsetneq S$.   
By Convention~\ref{conv:S-conditions}, the weakly special variety 
$S_b$, being the fiber product of $S$ with the the one-point space
$\{ \ast \}$ over $S_2$, is definably isomorphic to a 
space in $\mathsf{S}$.  By induction on dimension, 
Theorem~\ref{thm:density-special} already holds for $S_b$. 
\end{claimproof}

By our hypothesis that the intersection between 
$X$ and $S'$ is persistently likely, it is, in 
particular, likely.  Since we have
reduced to the case that $f:X^\text{an} \to S$ is 
an embedding by Claim~\ref{claim:injective}, 
we may express the likeliness of this intersection by
an equation \[ \dim_{\CC} (X) + \dim_{\CC} (S') = 
\dim_{\CC}(S) + k \]
for some nonnegative integer $k$.   Notice that we
have expressed this equality with dimensions as
complex analytic spaces.  Later, when we write 
``$\dim$'' without qualification we mean the 
o-minimal dimension, for which we would have 
\[ \dim X + \dim S' = \dim S + 2k \text{ .} \]

By the uniform Ax-Schanuel condition, which holds in 
$\mathsf{S}$ by Convention~\ref{conv:S-conditions} and 
Proposition~\ref{prop:uniform-Ax-Schanuel}, 
there is a finite list of families
of weakly special varieties 
\[  \xymatrix{ & S_{1,i} \ar[ld]_{\zeta_i} \ar[rd]^{\xi_{i}} & \\ S & & S_{2,i}} \] 
for $1 \leq i \leq n$ where $\zeta_i:S_{1,i} \to S$
is a finite $\mathsf{S}$-morphism and 
$\xi_i:S_{1,i} \to S_{2,i}$ is a surjective 
$\mathsf{S}$-morphism and if $\ell$ is a natural 
number with 
$(\gamma,\widetilde{\gamma}):\Delta^{\ell} 
\to U \times D$ is complex analytic with 
$\pi_\Gamma \circ \widetilde{\gamma} = g \circ \gamma$,
$\operatorname{rk}(d \gamma) = \ell > k$, and
$\widetilde{\gamma}(\Delta^\ell) \subseteq g D'$ for 
some $g \in G$, then for some 
$i \leq n$ and $b \in S_{2,i}$ we have 
$\widetilde{\gamma}(\Delta^\ell) \subseteq 
\zeta_i (\xi_i^{-1} \{ b \} ) \subsetneq S$.  

\begin{claim}
We may assume that $\zeta_i:S_{1,i} \to S$ is 
surjective for each $i \leq n$.
\end{claim}
\begin{claimproof}
By Claim~\ref{claim:dense-weakly-special}, we have reduced to the case
that $f(X)$ is not contained in any 
proper weakly 
special subvariety of $S$.  For any $i \leq n$ with $\zeta_i(S_{1,i}) \neq S$, 
we would thus have that $\zeta_i(S_{1,i}) \cap f(X)$ is a proper 
complex analytic subvariety of $f(X)$.  Thus, we may shrink $U$ so that for 
such an $i$ we have $\zeta_i (S_{1,i}) \cap U = \varnothing$.  We will thus
never encounter weakly special varieties of the 
form $\zeta_i(\xi^{-1} \{ b \})$ with $\gamma(\Delta^\ell) \subseteq 
\zeta_i(\xi^{-1} \{ b \}) \cap U$.  Thus, we may omit these families 
of weakly special varieties from our list.
\end{claimproof}

We may adjust our family of 
weakly special varieties to remember
only the maps $\xi_i:S_{1,i} \to S_{2,i}$.

\begin{claim}
\label{claim:identity-weakly-special}
We may assume that $S_{1,i} = S$  and 
$\zeta_i:S_{1,i} \to S$ is the identity 
map $\operatorname{id}_S:S \to S$.
\end{claim}
\begin{claimproof}
Work by induction on $n$.  In the inductive
case of $n+1$, apply 
Claim~\ref{claim:thm-for-finite-cover}
to replace $S$ by $S_{n+1,1}$.  We then need
to replace $S_{i,1}$ for $i \leq n$ by 
$S_{i,1} \times_S S_{n+1,i}$.  Conclude 
by induction.
\end{claimproof}

We shrink $U$ once again to ensure that all of the fibers of 
$\xi_i$ have the same dimension when restricted to $U$.

\begin{claim}
\label{claim:constant-fiber} We may shrink $U$ to a smaller
nonempty open set so that for all $i \leq n$ there is some 
number $d_i \in \mathbb{N}$ so that for all $u \in U$ we have 
$\dim (\xi_i^{-1} \{ \xi_i(u)\} \cap f(U) ) = d_i$.
\end{claim}
\begin{claimproof}
For each $i \leq n$ and each natural number $j \leq \dim U$, 
let \[ F_{i,j} := \{ u \in U : \dim_u (f^{-1} \xi_i^{-1} \{ \xi_i(f(u)) \})\} = j   \text{ .}\]

Here $\dim_u ( ~ )$ refers to the o-minimal dimension at 
$u$.

The definable set $U$ is the finite disjoint union of the 
definable sets 
\[ \bigcap_{i=1}^n F_{i,d_i} \]
as $(d_1, \ldots, d_n)$ ranges through $[0,\dim(U)]^n$.  
We may cell decompose $U$ subjacent to these definable sets. 
Let $V$ be an open cell in this cell decomposition.  Then for 
some sequence $(d_1, \ldots, d_n)$ we have 
$V \subseteq  \bigcap_{i=1}^n F_{i,d_i}$.   Because $V$ is an open 
cell, for each $u \in V$, we have 
\begin{eqnarray*}
\dim_u (f^{-1} \xi_i^{-1} \{ \xi(f(u)) \}) &=& \dim_u (V \cap f^{-1} \xi_i^{-1} \{ \xi(f(u)) \})\\
&=& \dim (V \cap f^{-1} \xi_i^{-1}
\{ \xi_i (f(u)) \} ) \text{ .}
\end{eqnarray*}

Apply Claim~\ref{claim:U-definable} to conclude.
\end{claimproof}

Consider now the following incidence correspondence.

\[ R := \{ (u,g) \in U \times G : f(u) \in \pi_\Gamma(gD' \cap 
\mathcal{F} ) \}\]

Note that $R$ is definable.

\begin{claim}
\label{claim:dim-R}  We have \[ \dim(R) = \dim(G) + 2k \text{ .}\]
\end{claim}

\begin{claimproof}
Fix the base point $\ast \in D' = G'/M'$
corresponding to $M' = M \cap G'$ in the coset space.
For $u \in U$, let 
$\widetilde{u} \in \mathcal{F}$ with $\pi_\Gamma(\widetilde{u}) = u$.
Let $g_0 \in G$ with $g_0 \ast = \widetilde{u}$.  We will check that 
$R_u := \{ g \in G : (u,g) \in R \}$ is a homogenous 
space for $M \times G'$ with fibers isomorphic to $G' \cap M$.
Indeed, if $h \in G'$ and 
$m \in M$, we have $\widetilde{u} = g_0 m h h^{-1} \ast$, 
demonstrating that $f(u) \in \pi_{\Gamma}((g_0 m h) D' \cap 
\mathcal{F})$.  That is, $g_0 m h \in R_u$.
On the other hand, if $g \in R_u$, then we can find some $h$ 
so that $g_0 \ast = \widetilde{u} = gh^{-1} \ast$.  That is, 
$m := g_0^{-1} gh^{-1} \in M$, the stabilizer of $\ast$ in $G$. 
That is, $g = g_0 m h \in g_0 M G'$.     We compute that
for $g_1, g_2 \in G'$ and $m_1, m_2 \in M$, we have 
$g_0 g_1 m_1 = g_0 g_2 m_2$ only if $g_2^{-1} g_1 = m_2 m_1^{-1} =: h
\in G' \cap M = M'$.   

Using the fiber dimension theorem, since 
all fibers over $U$ have the same dimension, 
$\dim(M \times G') - \dim (M \cap G')$, we now compute that
\begin{eqnarray*}
\dim R & = & \dim f(U) + \dim R_u \text{ \small for any $u \in U$} \\
    & = & \dim U + \dim(M \times G') - \dim (M \cap G') \\
    & = & \dim X + \dim M + (\dim G' - \dim (M \cap G')) \\
    & = & \dim X + \dim M + \dim S' \\
    & = & \dim X + (\dim G - \dim S) + \dim S' \\
    & = & \dim G + 2k \text{ .}
\end{eqnarray*}
\end{claimproof}

Abusing notation somewhat, for $g \in G$ we will also write 
$R_g$ for the fiber $\{ u \in U : (u,g) \in R \}$.    Note that
$R_g$ is definably, complex analytically isomorphic to 
$f(U) \cap \pi_\Gamma(g D' \cap \mathcal{F}')$ which is a 
locally closed complex analytic subset of $S$.  It follows that the 
o-minimal dimension of $R_g$ is always even.

For each $i \leq \dim X$, let us define 
\[ B_i := \{ g \in G : \dim R_g = i \} \text{ .}\]

\begin{claim}
\label{claim:lower-bound}  For $i < 2k$, we have 
$B_i = \varnothing$.
\end{claim}
\begin{claimproof}
We have reduced through Claim~\ref{claim:smooth} to the case that 
$S$ is smooth.  Hence, each component of 
$f(U) \cap \pi_\Gamma(g D' \cap \mathcal{F}')$ has complex dimension 
at least $\dim U + \dim D' - \dim S = k$.
\end{claimproof}

The set $B$ of the statement of the theorem may be expressed as 
\[ B = \bigcup_{i=0}^{\dim U} B_i \text{ .}\]

By Claim~\ref{claim:lower-bound}, we actually have
\[ B = \bigcup_{i=2k}^{\dim U} B_i \text{ .}\]

With the next claim we show that (again by shrinking $U$) we 
may arrange that $B = B_{2k}$.

\begin{claim}
\label{claim:upper-bound}
Possibly after shrinking $U$, we have $B_i = \varnothing$ for $i > 2k$.
\end{claim}
\begin{claimproof}
Suppose that $\ell > k$ and $g \in B_{2\ell}$. Then the complex 
analytic set $f(U) \cap \pi_\Gamma(gD' \cap \mathcal{F})$ has 
a component $L$ of complex dimension $\ell$.  
Let $(\gamma,\widetilde{\gamma}):\Delta^\ell \to U \times g D'$ be a 
complex analytic map with $\operatorname{rk}(d \gamma) = \ell$ and
$\pi_\Gamma \circ \widetilde{\gamma} = f \circ \gamma$.   
By our choice of the witnesses to the Ax-Schanuel 
property for $\mathsf{S}$, for some $i \leq n$ and $b \in S_{2,i}$
we have \[ f \circ \gamma (\Delta^\ell) \subseteq \xi_i^{-1} \{ b \} =: 
S_b \subsetneq S \text{ .}\]  

By our reduction from Claim~\ref{claim:constant-fiber} and the 
fiber dimension theorem, we have

\[\dim X = \dim U = \dim f(U) = \dim \xi_i f(U) + \dim (f(U) \cap S_b)  \text{ .}\]

Moreover, by the homogeneity of $S'$, we also have 

\[ \dim S' = \dim \xi_i(S') + \dim (S' \cap S_b) \]
 and, of course, 

 \[ \dim S = \dim S_{2,i} + \dim S_b  \text{ .}\]

By our hypothesis of persistent intersection, we have

\[ \dim_\CC \xi_i f(U) + \dim_\CC \xi_i S' = \dim S_{2,i} + k'  \]
for some $k' \geq 0$.

Written in terms of o-minimal dimension this says

\[ \dim \xi f(U) + \dim \xi_i S' = \dim S_{2,i} + 2 k' \text{ .}\]

Combining this equalities, we compute that 

\[ \dim (f(U) \cap S_b) + \dim (S' \cap S_b) = \dim S_b + 2k - 2k' \text{ .} \]

Since, $k' \geq 0$, this means that the expected (complex) dimension 
of a component of $f(U) \cap S_b \cap S_b$ is at most $k$, but 
$L$ is such a component of complex dimension greater than $k$.  
That is, $L$ is an atypical component of the intersection inside
$S_c \subsetneq S$.   Applying uniform Ax-Schanuel again, we 
may extend the family of weakly special varieties 
\[ \xymatrix{ & S_{1,i} \ar[ld]_{\zeta_i} \ar[rd]^{\xi_i} & \\
S &  & S_{2,i} } \] 

for $n+1 \leq i \leq n_2$ so that each such 
atypical component will satisfy 
$L \subseteq \zeta_i (\xi_i^{-1} \{ b_2 \} ) \subsetneq S_b \subsetneq S$ for some $i \leq n_2$ and $b_2 \in S_{2,i}$.

Repeating the reductions of the earlier claims and this extension of the 
list of weakly special witnesses to Ax-Schanuel $\dim S + 1$ times, 
we reach a contradiction to the hypothesis that $R_i$ is nonempty for 
some $i > 2k$.
\end{claimproof}

Thus, $B = B_{2k}$.  So we have 
\[ \dim G + 2k  = \dim  R = 
\dim B_{2k} + 2k = \dim B + 2k \text{ .}\]

Subtracting $2k$ from both sides, we conclude that 
$\dim B = \dim G$.  Hence, by cell decomposition, $B$ contains 
an open subset of $G$.
\end{proof}

 \section{Applications}
 \label{sec:applications}
 
 In this section we illustrate Theorem~\ref{thm:density-special} by considering 
 various situations in which it applies.
 
 \subsection{Arithmetic quotients}
 
 Our formalism is derived from that of Bakker, Klingler, and Tsimerman in~\cite{BKT} for the 
 study of arithmetic quotients.  They consider definable complex quotient spaces 
 $S_{G,\Gamma,M;\mathcal{F}}$ in which the algebraic group $\bG$ is a semisimple 
 $\mathbb{Q}$-algebraic group, $\Gamma$ is arithmetic (so commensurable with 
 $\bG(\mathbb{Z})$ for some / any choice of an integral model for $\bG$), and $M$ is
 compact.  They often require $\Gamma$ to be neat; we will return to that issue in a moment. 
 The definable fundamental domain $\mathcal{F}$ is not chosen sufficiently carefully in~\cite{BKT}, 
 an issue that was then addressed and fixed in \cite{BKT-erratum}.
 A similar issue is addressed in~\cite{OrSc} in that one needs to take $\mathcal{F}$ to be 
 constructed from a Siegel set associated to a maximal compact subgroup of $G$ containing $M$.  
 
 If we drop the neatness requirement on $\Gamma$, then an arithmetic quotient need not be smooth, but 
 because every arithmetic group has a neat subgroup of finite index, for any arithmetic quotient 
 $S$ we may find a smooth arithmetic quotient $\widetilde{S}$ and a finite surjective map of 
 arithmetic quotients $\widetilde{S} \to S$.   
 
 The one point space is clearly a terminal object in the category of arithmetic quotients 
 and the pullback construction of Proposition~\ref{prop:fiber-product} specializes to the 
 category of arithmetic quotients.   Since there are only countably many arithmetic quotients all told
 and at most countably many maps of algebraic groups between algebraic groups defined over 
 the rational numbers, it follows that the weakly special varieties are well-parameterized 
 within the category of arithmetic quotients.

The main theorem of~\cite{BaTs} is that period mappings associated to
 polarized variations of integral Hodge structures satisfy the 
 Ax-Schanuel condition with respect to $\mathsf{S}$.  It is an interesting open
 question whether every definable analytic map $f:X^\text{an} \to S$ 
 where $S$ is an arithmetic definable complex quotient space necessarily 
 satisfies the Ax-Schanuel condition with respect to $\mathsf{S}$.

Our last observation in verifying Convention~\ref{conv:S-conditions} and the hypotheses of 
Theorem~\ref{thm:density-special} for arithmetic quotients is that if 
$D' \subseteq D$ is a homogeneous space for which $\pi_\Gamma(D') \subseteq S$ is a special 
variety, then for every $g \in \mathbf{G}(\mathbb{Q}) \cap G$, $\pi_\Gamma(g D')$ is also special.
Thus the set of $g \in G$ for which $\pi_\Gamma(g D')$ is special is dense in $G$ for the Euclidean 
topology.

 Returning to the case where we know $f:X^\text{an} \to S$ to be a period mapping and 
 $f(X)$ to be Hodge generic in $S$, that is, not contained in any proper weakly special 
 subvariety, the union of $f^{-1} S'$ ranging over all proper special subvarieties $S' \subsetneq S$
 is called the Hodge locus.  In~\cite{Klingler-Otwinowska-HL}, a 
 dichotomy theorem is proven for a 
 modified form of the Hodge locus which 
 they call the Hodge locus of positive period dimension: either this locus is 
 Zariski dense in $X$ or it is itself a 
 proper algebraic subvariety of $X$.   In a very recent 
 preprint~\cite{KhUr}, tight conditions for the density of the Hodge loci 
 are established.

 Because the special subvarieties of $S$ come from $\mathbb{Q}$-semisimple
 algebraic subgroups of $\mathbf{G}$ and there are only finitely many such subgroups up to 
 $G = \mathbf{G}(\mathbb{R})^+$-conjugacy, all special subvarieties of $S$ come from finitely 
 many families of homogeneous spaces in the sense of Theorem~\ref{thm:density-special}.
 That is, we can find finitely many homogeneous spaces $D_1, \ldots, D_n \subseteq D$ so that 
 for any special subvariety $S' \subseteq S$ there is some $g \in G$ and $i \leq n$ with 
 $S' = \pi_\Gamma(g D_i)$.    Thus, if for some special subvariety $S' \subseteq S$ the intersection 
 of $X$ with $S'$ is persistently likely, then the Hodge locus is Euclidean dense in $X$.  In fact, we 
 may take $D_i$ so that $S' = \pi(g D_i)$ for some $g \in G$ and we see that the subset of the 
 Hodge locus of the form $f^{-1} \bigcup_{h \in G, \pi_\Gamma(h D_i) \text{ special }} \pi_\Gamma(h D_i)$
 is Euclidean dense in $X$.    In~\cite{BKU} a theorem of a similar flavor is proven.  They show that 
 if the \emph{typical} Hodge locus is nonempty, then it is analytically dense in $X$.  Here the typical 
 Hodge locus is the union of all components of $f^{-1} S'$ of expected dimension as $S'$ ranges through the 
 special subvarieties of $S$.  The proof in~\cite{BKU} uses some elements in common with ours.  Notably, 
 Ax-Schanuel plays a central role in both proofs.  To pass from a nonempty typical locus to one which is 
 dense, they argue an analysis of Lie algebras to find enough special varieties.  Such a technique is not 
 available to us in general as we must postulate the existence of special varieties of a given shape.
 On the other hand, such an argument does not immediately lend itself to a study of intersections with a 
 restricted class of special varieties.
 
Definability of the period mappings associated
to admissible, graded polarized,
variation of mixed Hodge structures has been 
established by Bakker, Brunebarbe, Klingler, and Tsimerman 
in~\cite{BBKT} and then Ax-Schanuel for these
maps was proven independently by Chiu~\cite{Chiu-ASwd}
and Gao and Klingler~\cite{GaKl}.  Indeed, Chiu
has established a stronger Ax-Schanuel theorem 
with derivatives for such period maps associated to 
variations of mixed Hodge structures~\cite{Chiu-AS-mixed}.  These results give the necessary 
ingredients to extend our result on the density of 
Hodge loci to variations of mixed Hodge 
structures.  We will return to the special case of 
universal abelian schemes over moduli spaces in
Section~\ref{sec:torsion-family} to draw a conclusion
from the combination of 
our Theorem~\ref{thm:density-special} and 
Ax-Schanuel in the context of mixed Shimura 
varieties.

 \subsection{Modular varieties}
 
 For the sake of illustration, let us consider a very special case of Theorem~\ref{thm:density-special}.
 We take $S = \mathbb{A}^n = X_0(1)^n$.  That is, $S$ is affine $n$-space (for some positive 
 integer $n$) regarded as the coarse moduli space of $n$-tuples of elliptic curves.  We may see 
 $S$ as an arithmetic quotient space, taking $\mathbf{G} = \operatorname{PGL}_2^n$, 
 $\Gamma = \operatorname{PGL}_2^n(\mathbb{Z})$, and 
 the homogeneous space $D$ may be identified with 
 $\mathfrak{h}^n$ where $\mathfrak{h} = \{ z \in \CC : \operatorname{Im}(z) > 0 \}$ is the upper half plane.  
 
 If $\sigma = \langle \sigma_1, \ldots, \sigma_m \rangle$ 
 is a finite sequence taking values in $\{ 1, \ldots, n \}$ we may define
 $\pi_\sigma:S \to \mathbb{A}^m$ by $(x_1,\ldots,x_n) \mapsto 
 (x_{\sigma_1},\ldots,x_{\sigma_n})$.   For 
 $J \subseteq \{1, \ldots, n \}$ we list the elements of 
 $J$ in order as $J = \{ j_1 < j_2 < \ldots < j_m \}$ and
 write $\pi_J$ for $\pi_{\langle j_1, \ldots, j_m \rangle}$.
 For a singleton $J = \{ j \}$, we just write $\pi_j$ for 
 $\pi_J$.
 
 For $S' \subseteq S$ a special subvariety 
 of $S$ provided that for each $i \leq n$ the projection map $\pi_i:S \to \mathbb{A}^1$ is dominant, then 
 $S'$ defines a partition $\Pi(S')$ of $\{ 1, \ldots, n \}$ by
 the rule that $i$ and $j$ lie in a common element of the
 partition if and only if $\dim \pi_{\langle i, j \rangle} S' = 1$.
 Given a partition $\Pi$ of $\{ 1, \ldots, n \}$ we say that 
 $S'$ is a special variety of type $\Pi$ if $\Pi(S') = \Pi$.  
 Let us observe that a special variety of type $\Pi$ has dimension 
 equal to $\# \Pi$. 

Fix a partition $\Pi$ with $\# \Pi = m$.  Let $D_\Pi' \subseteq D$
be the homogeneous subspace of $D$ defined by $\tau_i = \tau_j$ 
if and only there is some $\nu \in \Pi$ with $\{ i, j \} \subseteq
\nu$.   Then $\pi_\Gamma(D_\Pi') =: S'$ is the 
corresponding multi-diagonal subvariety of $S$ and 
is a special variety of type $\Pi$. Indeed, the special varieties 
of the form $\pi_\Gamma(gD_\Pi')$ as $g$ ranges through 
$\operatorname{PGL}_2(\mathbb{Q}) \cap G$ are exactly the 
special varieties of type $\Pi$.

For a partition $\Pi$ of $\{ 1, \ldots, n \}$ and 
subset $J \subseteq \{1, \ldots, n\}$ of $\{1, \ldots, n\}$, 
we define \[ \Pi \upharpoonright J := \{ 
\nu \cap J : \nu \in \Pi, \nu \cap J \neq \varnothing \} \]
to be the restriction of the partition $\Pi$ to $J$.   

It is easy to check that for any special variety $S' \subseteq S$
and subset $J \subseteq \{ 1, \ldots, n \}$, the 
projection $\pi_J(S')$ is a special variety and 
$\Pi( \pi_J(S')) = \Pi \upharpoonright J$.   With these
combinatorial preliminaries in place, we may state the 
specialization of Theorem~\ref{thm:density-special} to the 
case of $Y_0(1)^n$.

\begin{proposition}
\label{prop:special-intersection-j}
Let $n \geq 1$ be a positive integer and $\Pi$ a 
partition of $\{ 1, \ldots, n \}$.  If 
$X \subseteq \mathbb{A}^n$ is an irreducible complex 
algebraic subvariety of affine $n$-space, regarded as 
the coarse moduli space of $n$-tuples of elliptic curves,
and for every $J \subseteq \{ 1, \ldots, n \}$ we have
$\# \Pi \upharpoonright J  + \dim \pi_J(X) \geq \#J$, then
\[ X \cap \bigcup_{S' \subseteq \mathbb{A}^n 
\text{ special of type } \Pi } S' \]
is dense in $X$ for the Euclidean topology.
\end{proposition}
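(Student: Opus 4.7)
The plan is to apply Theorem~\ref{thm:density-special} to the inclusion $f:X \hookrightarrow S = \mathbb{A}^n$, where $S$ is realized as the arithmetic quotient $\pi_\Gamma(\mathfrak{h}^n)$ with $\mathbf{G} = \operatorname{PGL}_2^n$ and $\Gamma = \operatorname{PGL}_2^n(\mathbb{Z})$. I would take the special subvariety $S' = \pi_\Gamma(D_\Pi')$, where $D_\Pi' \subseteq \mathfrak{h}^n$ is the homogeneous subspace cut out by the equalities $\tau_i = \tau_j$ for $i$ and $j$ in the same block of $\Pi$, so that $S'$ is the multi-diagonal special variety of type $\Pi$. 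By the discussion of arithmetic quotients in the previous subsection, the ambient category $\sS$ satisfies Convention~\ref{conv:S-conditions}; in particular, the Ax--Schanuel condition holds by the Pila--Tsimerman theorem for products of modular curves. As noted just before the proposition, the set of $g \in G$ for which $\pi_\Gamma(gD_\Pi')$ is special of type $\Pi$ is precisely the image of $\operatorname{PGL}_2(\mathbb{Q})^n \cap G$, which is Euclidean dense in $G$.

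What remains is to verify that the intersection of $X$ with $S'$ is persistently likely. Given any $\sS$-morphisms $\zeta:S_1 \to S$ finite and $\xi:S_1 \to S_2$ surjective, I would check the inequality $\dim \xi(\zeta^{-1}(X)) + \dim \xi(\zeta^{-1}(S')) \geq \dim S_2$ by first classifying such pairs. Any morphism of Shimura data from $\mathbf{G}_1$ into $\operatorname{PGL}_2^n$ decomposes coordinate-wise into $n$ maps $\mathbf{G}_1 \to \operatorname{PGL}_2$, each of which is either trivial (sending that factor to a special point) or the composition of a projection from $\mathbf{G}_1$ onto one of its $\operatorname{PGL}_2$ factors with an inner automorphism by an element of $\operatorname{PGL}_2(\mathbb{Q})$, i.e.\ a Hecke twist. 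Since Hecke twists are finite and preserve dimensions of subvarieties as well as the type of special subvarieties, after passing to finite covers and invoking the factorization of Convention~\ref{conv:basic-conditions-S}, one reduces to the case in which $\zeta$ is a Hecke correspondence on $\mathbb{A}^n$ and $\xi$ is essentially a projection $\pi_J:\mathbb{A}^n \to \mathbb{A}^{\#J}$ for some $J \subseteq \{1,\ldots,n\}$. In this case $\dim \xi(\zeta^{-1}(X)) = \dim \pi_J(X)$, $\dim \xi(\zeta^{-1}(S')) = \#(\Pi\upharpoonright J)$, and $\dim S_2 = \#J$, so the persistent likelihood inequality becomes precisely $\dim \pi_J(X) + \#(\Pi\upharpoonright J) \geq \#J$, which is the hypothesis of the proposition.

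The main obstacle is the bookkeeping for the classification of $\sS$-morphisms between products of modular curves and the verification that, modulo Hecke correspondences, every pair $(\zeta,\xi)$ truly reduces to a projection of the stated form. Once this is in hand, the combinatorial hypothesis matches persistent likelihood exactly, and Theorem~\ref{thm:density-special} yields a Euclidean dense subset of $X$ consisting of points of special intersections of type $\Pi$, as required.
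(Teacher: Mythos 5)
Your proposal follows essentially the same line as the paper's proof: realize $\mathbb{A}^n$ as the arithmetic quotient of $\mathfrak{h}^n$ by $\operatorname{PGL}_2^n(\ZZ)$, take $S'$ to be the multi-diagonal $\pi_\Gamma(D_\Pi')$, use simplicity of the $\operatorname{PGL}_2$ factors to reduce any pair $(\zeta,\xi)$ to a Hecke twist and a coordinate projection $\pi_J$, and thereby translate persistent likelihood into the stated combinatorial inequality $\dim\pi_J(X) + \#(\Pi\upharpoonright J) \geq \#J$. The paper makes the reduction to $\pi_J$ precise via a commuting square with a finite map $\widetilde{\zeta}:S_2\to\mathbb{A}^{\#J}$ (so dimensions are preserved even though $\xi$ is only equal to $\pi_J$ up to a finite cover), but this is exactly the bookkeeping you flagged as the remaining obstacle, and your argument is otherwise the same.
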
 
\begin{proof}
Let us check that the intersection between $X$ 
and $S' := \pi_\Gamma(D_\Pi)$ is persistently likely.  
Let \[ \xymatrix{ & S_1 \ar[ld]_\zeta \ar[rd]^\xi & \\ S &  & S_2 }\] be a pair of surjective 
maps of arithmetic quotients with $\zeta$ 
finite.   The arithmetic quotients $S_1$ and $S_2$ will 
take the form $S_1 = S_{\operatorname{PGL}_2^n,\Gamma_1,M_1;
\mathcal{F}_1}$ and $S_2 = S_{\operatorname{PGL}_2^k,\Gamma_2,M_2;
\mathcal{F}_2}$ with $k \leq n$ where $\Gamma_j$ is an arithmetic
group in for $j = 1$ and $2$ and the corresponding 
homogeneous spaces are $\mathfrak{h}^n$ and $\mathfrak{h}^k$, 
respectively.  Since each $\operatorname{PGL}_2$ 
factor is simple, the maps of algebraic
groups corresponding $\zeta$ and $\xi$ are given by coordinate 
projections followed by an inner automorphism defined over 
$\QQ$.  That is, the map of groups corresponding to $\xi$ is 
given by $\langle g_1, \ldots, g_n \rangle \mapsto 
\langle g_{j_1}, \dots, g_{j_k} \rangle$ followed by 
an inner automorphism of $\operatorname{PGL}_2^k$ defined over
$\mathbb{Q}$ for some collection of $k$ distinct numbers
$j_1, \ldots, j_k$ between $1$ and $n$, and likewise for 
$\zeta$.  Let $J = \{ j_1, \ldots, j_k \}$, then 
permuting coordinates we see that this family of weakly special 
varieties fits into the commuting square 
\[  \xymatrix{ & S_1 \ar[ld]_\zeta \ar[rd]^\xi & \\ S \ar[rd]_{\pi_J}  &  & S_2 \ar[ld]^{\widetilde{\zeta}} \\ & \mathbb{A}^k &  } \]
where $\widetilde{\zeta}:S_2 \to \mathbb{A}^k$ is finite.  
We then have 
\begin{eqnarray*}
\dim \xi \zeta^{-1} X + \xi \zeta^{-1} S' & = & 
\dim \pi_J X + \dim \pi_J (S') \\
 & = & \dim \pi_J X + \# \Pi \upharpoonright J \\
 & \geq & k \\
 & = & \dim S_2 
\end{eqnarray*}

Since $\operatorname{PGL}_2^n(\mathbb{Q})^+$ is the
commensurator of $\operatorname{PGL}_2^n(\ZZ)$ and
is dense in $G = \operatorname{PGL}_2^n(\RR)^+$, 
the concluding ``in particular'' clause of Theorem~\ref{thm:density-special}
applies and we find that the intersections of $X$ with special varieties of 
type $\Pi$ is dense in $X$ in the Euclidean topology.
\end{proof} 

Instances of Proposition~\ref{prop:special-intersection-j} appear in the literature.
Habegger shows in~\cite[Theorem 1.2]{Hab} that if $X \subseteq \mathbb{A}^2$ is a curve defined
over the algebraic numbers, then there is a constants $c = c(X) > 0$ and $p_0(X) > 0$
so that for every prime number $p > p_0(X)$ there is an algebraic point 
$P \in X(\mathbb{Q}^\text{alg}) \cap Y_0(p) (\mathbb{Q}^\text{alg})$ 
with logarithmic height $h(P) \geq c \log(p)$ where here $Y_0(p)$ is the 
modular curve parametrizing the isomorphism classes of pairs of elliptic curves
$\langle E, E' \rangle$ for which there is an isogeny $E \to E'$ of degree $p$.
Habegger's result implies in particular that for $n = 2$ and $\Pi = \{ \{ 1, 2 \} \}$, 
if $X \subseteq \mathbb{A}^2_{\mathbb{Q}^\text{alg}}$ is an affine 
plane curve defined over the algebraic numbers, then the intersection of 
$X$ with the special varieties of type $\Pi$ is Zariski dense in $X$.  Using 
equidistribution results, this Zariski density could be upgraded to Euclidean density.

In the discussion after Remark 3.4.5 in~\cite{Za-book}, Zannier sketches an argument 
showing that if $X \subseteq \mathbb{A}^2$ is a rational affine plane curve, then the
intersections of $X$ with special curves of type $\Pi$, as in the previous paragraph, 
are dense in $X$ in the Euclidean topology.

 \subsection{Torsion in families of abelian varieties}
\label{sec:torsion-family}

 If $\pi:A \to B$ is an abelian scheme of relative 
 dimension $g$ over the irreducible 
 quasiprojective complex algebraic variety $B$ and
 $X \subseteq A$ is a quasi-section of $\pi$, by which we 
 mean that $\pi$ restricts to a generically finite map on $X$, 
 then under some mild nondegeneracy conditions, 
 we expect that the set  \[ \pi(X \cap A_{\text{tor}}) = \{ b \in B(\CC) :  (\exists n \in \ZZ_+) X_b \cap A_b(\CC)[n] \neq 
 \varnothing \} \]
of points on the base over which $X$ meets the torsion 
subgroup of the fiber is dense in $B$ if and only if 
$\dim B \geq g$.   Masser-Zannier prove in~\cite{MZ-ta} that
when $B = \mathbb{P}^1 \smallsetminus \{0, 1, \infty\}$ and 
$\pi:A \to B$ is the square of the 
Legendre family of elliptic curves defined in 
affine coordinates by $y_1^2 = x_1(x_1-1)(x_1-\lambda)$
and $y_2^2 = x_2(x_2 - 1)(x_2 - \lambda)$ where 
$\lambda$ ranges over $B$, and $X$ is the curve defined by
$x_1 = 2$ and $x_2 = 3$, then the set 
$\pi(X \cap A_{\text{tor}})$ is finite.  This theorem sparked
much work on torsion in families of abelian varieties 
culminating in a result announced by Gao and Habegger that, at 
least for such abelian schemes $\pi:A \to B$ defined over
$\QQ^\text{alg}$, if $X \subseteq A$ is an algebraic variety,
also defined over $\QQ^\text{alg}$ so that the group generated
by $X$ is Zariski dense in $A$ and $\pi(X \cap A_{\text{tor}})$
is Zariski dense in $B$, then $\dim X \geq g$.   
 
In the opposite direction, Andr\'{e}, Corvaja, and Zannier study
in~\cite{ACZ}  the problem of density of torsion 
through an analysis of the rank of the Betti map.
In an appendix to that paper written by Gao, it is shown
that if $\pi:A \to B$
is a principally polarized abelian scheme of relative dimension $g$ which has no non-trivial endomorphism (on any finite covering), and for which the image of $S$ in the moduli
space $\mathcal{A}_g$ of abelian varieties of dimenion $g$ 
itself has dimension at least $g$ and $X \subseteq A$ is 
the image of a section of $\pi$, then 
$\pi(X \cap A_{\text{tor}})$ is dense in $B$ in the 
Euclidean topology.  The proof of this result 
made use of the Ax-Schanuel theorem for pure Shimura
varieties and was subsequently upgraded.  See in 
particular Gao's work on the Ax-Schanuel theorem 
for the universal abelian variety~\cite{Gao-AS}  and on
the Betti map in~\cite{Gao-rank-Betti,Gao-rank-Betti-er}.

Gao's main theorem, Theorem 1.1, in~\cite{Gao-rank-Betti} may be seen as a geometric elaboration 
of what Theorem~\ref{thm:density-special} means 
for the density of torsion.  Gao considers 
an abelian scheme $\pi:A \to B$ of relative
dimension $g$ over a quasiprojective complex
algebraic variety $B$ and a closed irreducible
subvarierty $X \subseteq A$ and then establishes
the conditions under which the generic rank of the
Betti map restricted to $X$ may be smaller than 
expected.  It is noted with~\cite[Proposition 2.2.1]{ACZ} that density of the torsion in $X$ 
follows from the Betti map,  generically, 
having rank $2g$ on $X$.   Thus, the converse
of Gao's condition gives a criterion for when 
the torsion is dense. 

In more detail, taking finite covers if necessary,
one may pass from the problem 
of density of torsion in $X$ as a subvariety of $A$, 
to the density of torsion in $\widetilde{\iota}(X)$
in $\mathfrak{A}$ where $\mathfrak{A} \to \mathcal{A}$
is a universal abelian variety over 
a moduli space $\mathcal{A}$ of abelian varieties
of some fixed polarization type with some fixed
level structure and the Cartesian square 
\[ \xymatrix{A \ar[r]^{\widetilde{\iota}} \ar[d]_\pi 
& \mathfrak{A} \ar[d]^\pi \\ 
B \ar[r]^\iota & \mathcal{A} } \]
expresses $A \to B$ as coming from this universal 
family.   To ease notation, we replace
$B$ by $\iota(B)$ and $X$ by $\widetilde{X}$.  
Shrinking the moduli space, possibly taking 
covers, and moving to an abelian subscheme of
$\mathfrak{A}_B$, we may arrange that $X$ is 
not contained in any proper weakly special 
varieties.    At this point, 
Theorem~\ref{thm:density-special} says that 
$X \cap \mathfrak{A}_\text{tor}$ is dense 
in $X$ in the Euclidean topology if 
the intersection of $X$ with the zero section 
is persistently likely.  Gao's criterion 
expresses geometrically what persistent likelihood
means here: for any abelian subscheme 
$\mathfrak{A}'$ of $\mathfrak{A}_B$, if 
$p:\mathfrak{A}_B \to \mathfrak{A}_B / \mathfrak{A}'$
is the quotient map, then $\dim p(X)$ is at least
the relative dimension of 
$\mathfrak{A}_B / \mathfrak{A}'$ over $B$.

\bibliographystyle{siam}
\bibliography{li.bib}

\end{document}